\DeclareSymbolFont{bbold}{U}{bbold}{m}{n}
\DeclareSymbolFontAlphabet{\mathbbold}{bbold}
\newtheorem{thm}{Theorem}%[Section]
\newtheorem{prop}[thm]{Proposition}
\newtheorem{lem}[thm]{Lemma}
\newtheorem{cor}[thm]{Corollary}
\theoremstyle{definition}
\theoremstyle{remark}
\newtheorem{rem}[thm]{Remark}
\newcommand{\R}{\mathbb{R}}
\newcommand{\C}{\mathbb{C}}
\newcommand{\eps}{\varepsilon}
\def\XXint#1#2#3{{\setbox0=\hbox{$#1{#2#3}{\int}$ }
\vcenter{\hbox{$#2#3$ }}\kern-.6\wd0}}
\title{A singular variant of the Falconer distance problem}
\author{Tainara Borges, Alex Iosevich, and Yumeng Ou}
\address[T. Borges]{Department of Mathematics, Brown University, Providence, RI 02912}
\address[A. Iosevich]{Department of Mathematics, University of Rochester, Rochester, NY 14627}
\address[Y. Ou]{Department of Mathematics, University of Pennsylvania, Philadelphia, PA 19104}
\begin{document}

\begin{abstract} In this paper we study the following variant of the Falconer distance problem. Let $E$ be a compact subset of ${\R}^d$, $d \ge 1$, and define 
$$ \Box(E)=\left\{\sqrt{{|x-y|}^2+{|x-z|}^2}: x,y,z \in E,\, y\neq z \right\}.$$

We shall prove using a variety of methods that if the Hausdorff dimension of $E$ is greater than $\frac{d}{2}+\frac{1}{4}$, then the Lebesgue measure of $\Box(E)$ is positive. This problem can be viewed as a singular variant of the classical Falconer distance problem because considering the diagonal $(x,x)$ in the definition of $\Box(E)$ poses interesting complications stemming from the fact that the set $\{(x,x): x \in E\}\subseteq \R^{2d}$ is much smaller than the sets for which the Falconer type results are typically established.

We also prove a finite field variant of the Euclidean results for $\Box(E)$ and indicate both the similarities and the differences between the two settings. 
\end{abstract}

\maketitle

\tableofcontents
\section{Introduction}

Let $E\subseteq \R^{d}$ be a compact set, where $d\geq 1$. $|\cdot|$ will denote the euclidean distance in $\R^{d}$ or $\R^{2d}$ depending on the context. We will be interested in the following special distance set

\begin{equation}\label{boxEdef}
\begin{split}
     \Box(E)=&\{|(y,z)-(x,x)|\colon x,y,z\in E,\, y\neq z\}  \\
     =&\{\sqrt{|y-x|^2+|z-x|^2}\colon x,y,z\in E,\, y\neq z\}
\end{split}  
\end{equation}
and its pinned version 

\begin{equation}
    \Box_{x}(E)=\{|(y,z)-(x,x)|\colon y,z\in E,\, y\neq z\},\quad x\in E.
\end{equation}

Our main motivating questions are the following Falconer type distance problems:

\begin{enumerate}
\item How large does the Hausdorff dimension of $E$ has to be so that the Lebesgue measure $\mathcal{L}^{1}(\Box(E))>0$?  What if we want $\Box(E)$ to contain an interval?
\item  How large does the Hausdorff dimension of $E$ has to be so that there exists $x\in E$ with $\mathcal{L}^{1}(\Box_{x}(E))>0$? When can we guarantee that $\Box_x(E)$ contains an interval?
\end{enumerate}

If one replaces the set $\Box(E)$ (resp. $\Box_x(E)$) by the classical distance set $\Delta(E):=\{|x-y|:\, x,y\in E\}$ (resp. $\Delta_x(E):=\{|x-y|:\, y\in E\}$), the above questions are usually referred to as the Falconer distance problem. The celebrated Falconer distance conjecture \cite{falconer85} says that if $E\subset \mathbb{R}^d$ has Hausdorff dimension ${\rm dim}(E)>\frac{d}{2}$, then $\mathcal{L}^1(\Delta(E))>0$. The conjecture is still open in all dimensions $d\geq 2$ and has been a major driving force behind many recent breakthroughs in harmonic analysis and geometric measure theory. See for instance \cite{GIOW,DIOWZ,DZ19,DGOWWZ, Orponen, SW} for the currently best known partial results and some recent development concerning sets satisfying special regularity. It is also worth noticing that all the state-of-the-art partial results for the Falconer distance problem actually say that the smaller pinned distance set $\Delta_x(E)$ has positive Lebesgue measure, for some $x\in E$.

In this context, the box set defined above can be viewed as a prototype example of a refined version of the distance set $\Delta(E\times E):=\{|(y,z)-(y',z')|:\, (y,z),\,(y',z')\in E\times E\}$ where we only consider distances to the diagonal. The restriction $y\neq z$ is necessary here to make sure that it captures distances from points away from the diagonal. Indeed, for $y=z$, the corresponding box set becomes precisely a rescaled distance set $\Delta(E)$, to which the Falconer distance results already apply. 

Note that this variant of the distance problem is singular in the sense that the subset we are restricting to (in this example, the diagonal) has lower Hausdorff dimension than the set $E\times E$. This poses many challenges to the study of the problem. For instance, let $\mu$ be the standard Frostman measure supported on $E$. Even if one knows that $\mathcal{L}^1(\Delta_{(y,z)}(E\times E))>0$ for all $(y,z)$ in a subset of $E\times E$ with positive $\mu \times \mu$ measure, it wouldn't be enough to yield that $\mathcal{L}^1(\Box_x(E))>0$ for some $x\in E$, since the diagonal $\{(x,x)\}$ has zero $\mu \times \mu$ measure. 

It is also natural to consider more general versions of such singular variants of distance sets, for example distances to other lower dimensional geometric objects in the set. Indeed, the method we introduce in this article extends well beyond the box set, and we will comment on this with more details in the proofs later.

\subsection*{Main results and methods}

Our main theorems are the following. First,

\begin{thm}\label{thm: interval}
Let $d\geq 2$ and $E\subset \mathbb{R}^d$ be a compact set. If its Hausdorff dimension 
\[
{\rm dim}(E)>\begin{cases} \frac{d}{2}+\frac{1}{4},& \text{$d$ even},\\ \frac{d}{2}+\frac{1}{4}+\frac{1}{8d-4},& \text{$d$ odd},
\end{cases}
\]then there is a point $x\in E$ such that $\Box_x(E)$ contains an interval.
\end{thm}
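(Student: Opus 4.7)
The plan is to reduce Theorem~\ref{thm: interval} to the state-of-the-art pinned Falconer distance theorem in $\R^d$, combined with a Steinhaus-type sumset argument that exploits the extra ``second point'' in the definition of $\Box_x(E)$.

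\textbf{Step 1 (Invoke the pinned Falconer theorem).} Under the hypothesis that $\dim(E) > d/2+1/4$ for $d$ even (established in \cite{GIOW} for $d=2$ and in \cite{DGOWWZ} for even $d\geq 4$) or $\dim(E) > d/2+1/4+1/(8d-4)$ for $d$ odd (established in \cite{DIOWZ}), there exists $x\in E$ such that the classical pinned distance set $\Delta_x(E) = \{|y-x| : y\in E\}$ has positive one-dimensional Lebesgue measure. The thresholds in the statement of Theorem~\ref{thm: interval} match these bounds exactly, so this step is a direct appeal.

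\textbf{Step 2 (Steinhaus).} Fix $x$ from Step~1. Since $\Delta_x(E) \subset [0,\mathrm{diam}(E)]$ has positive Lebesgue measure, we may pick a subset $A \subset \Delta_x(E) \cap (\eps,\mathrm{diam}(E))$ with $\mathcal{L}^1(A)>0$ for some $\eps>0$. As $a \mapsto a^2$ is a bi-Lipschitz diffeomorphism on $(\eps,\mathrm{diam}(E))$, the squared set $A^2 := \{a^2 : a\in A\}$ still has positive Lebesgue measure, so by Steinhaus' theorem the sumset $A^2+A^2$ contains an open interval $J\subset(0,\infty)$.

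\textbf{Step 3 (Off-diagonal solutions yield the interval).} The standard convolution proof of Steinhaus's theorem in fact yields that for every $t\in J$ the slice $\{u \in A^2 : t-u \in A^2\}$ has positive one-dimensional Lebesgue measure, so pulling back through $a\mapsto a^2$, the set $\{a\in A : \sqrt{t-a^2}\in A\}$ has positive measure. In particular, for each $t \in J$ there exist uncountably many pairs $(a,b)\in A\times A$ with $a^2+b^2=t$, of which at most one (namely $a=b=\sqrt{t/2}$) lies on the diagonal. Pick any off-diagonal pair $(a,b)$. Since $a,b\in \Delta_x(E)$, there are $y,z\in E$ with $|y-x|=a$, $|z-x|=b$; and $a\neq b$ forces $y\neq z$. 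Hence $\sqrt{t} = \sqrt{|y-x|^2+|z-x|^2} \in \Box_x(E)$, so $\Box_x(E)\supseteq \sqrt{J}$, which is a nontrivial open interval.

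\textbf{Main (non-)obstacle.} The only substantial analytic input is the pinned Falconer theorem with the sharp threshold in Step~1; Steps~2--3 are essentially elementary. The conceptual point is that the additional second point $z$ built into $\Box_x(E)$ supplies exactly the smoothing needed to promote ``$\mathcal{L}^1(\Delta_x(E))>0$'' to ``$\Box_x(E)$ contains an interval'', bypassing the quantitatively weaker interval-version of the pinned Falconer problem.
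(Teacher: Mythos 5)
Your proposal is correct, and its core mechanism is the same as the paper's: write the squared pinned box set as a sum of squared pinned distance sets and invoke Steinhaus, with the analytic input being exactly the best known pinned Falconer thresholds. The one genuine difference is how the non-degeneracy condition $y\neq z$ is handled. The paper first applies Lemma \ref{lem: subsets} to produce three mutually separated subsets $E_1,E_2,E_3$ and then needs the pinned Falconer theorem in a two-set form with a \emph{common} pin $x\in E_3$ making both $\Delta'_x(E_1)$ and $\Delta'_x(E_2)$ of positive measure (this is why Theorem \ref{sumsettheorem} is phrased as a hypothesis about pairs $A_1,A_2$ with a pin chosen from a third set); the disjointness of $E_1$ and $E_2$ then forces $y\neq z$ automatically. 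You instead work with a single set $A\subset\Delta_x(E)$ and observe that for each $t$ in the Steinhaus interval the fiber $\{a\in A:\sqrt{t-a^2}\in A\}$ has positive measure, so all but at most one representation $t=a^2+b^2$ has $a\neq b$, which forces $y\neq z$. Your version is slightly more economical in that it only requires the standard single-set pinned Falconer theorem rather than the common-pin two-set variant, at the cost of the small fiber-multiplicity argument; both are valid. One minor slip: the citations for the even and odd dimensional thresholds are swapped relative to the paper (the odd-dimensional bound $\frac{d}{2}+\frac14+\frac{1}{8d-4}$ comes from \cite{DGOWWZ,DZ19}, the even-dimensional one from \cite{GIOW,DIOWZ}), but this does not affect the argument.
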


In the case that one is only interested in the Lebesgue measure of $\Box(E)$ and $\Box_x(E)$, the above result can be further improved to the following, which holds even in dimension one.

\begin{thm}\label{thm: measure}
Let $d\geq 1$ and $E\subset \mathbb{R}^d$ be a compact set. If its Hausdorff dimension ${\rm dim}(E)>\frac{d}{2}+\frac{1}{4}$, then the Lebesgue measure $\mathcal{L}^1(\Box(E))>0$.

Moreover, if $d=1$ and ${\rm dim}(E)>\frac{1}{2}+\frac{1}{4}=\frac{3}{4}$, then there is a point $x\in E$ such that $\mathcal{L}^1(\Box_x(E))>0$.
\end{thm}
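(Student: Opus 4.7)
Fix $s > d/2+\frac14$ and let $\mu$ be a Frostman probability measure on $E$ with $\mu(B(p,r)) \le Cr^s$. For each $x\in E$ define
\[
\nu_x := \Psi_{x,*}(\mu\otimes\mu), \qquad \Psi_x(y,z) := |y-x|^2+|z-x|^2,
\]
so that $\nu_x$ is the squared-distance measure in $\R^{2d}$ of $\mu\otimes\mu$ pinned at the diagonal point $(x,x)$. (Using $\Psi_x$ rather than $\sqrt{\Psi_x}$ is harmless, since $r\mapsto\sqrt r$ preserves absolute continuity.) Let $\nu := \int \nu_x\,d\mu(x)$ be the unpinned version. I will deduce both assertions of Theorem~\ref{thm: measure} from the single $L^2$ estimate
\[
\mathcal J := \int_E \|\nu_x\|_{L^2(\R)}^2\,d\mu(x) < \infty.
\]
Cauchy--Schwarz together with $\mu(E)\le 1$ gives $\|\nu\|_{L^2}^2\le\mathcal J$, so $\nu$ is absolutely continuous and $\mathcal L^1(\Box(E))>0$; and Chebyshev gives $\nu_x\in L^2(\R)$ for $\mu$-a.e.\ $x\in E$, which proves the pinned statement (in particular when $d=1$). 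The excluded diagonal $\{y=z\}$ carries zero $(\mu\otimes\mu)$-mass under the Frostman hypothesis and can be harmlessly discarded.

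\textbf{Fourier reduction.} By Plancherel, $\widehat{\nu_x}(t) = h_x(t)^2$ where
\[
h_x(t) := \int_{\R^d} e^{-2\pi i t|y-x|^2}\,d\mu(y),
\]
so $\mathcal J = \int_E\int_\R |h_x(t)|^4\,dt\,d\mu(x)$. Expanding the fourth power into a four-fold integral over $(y_1,\dots,y_4)\in E^4$ and integrating $t$ out produces a $\delta$-function supported on the affine-quadratic relation
\[
2x\cdot (y_1-y_2+y_3-y_4) = |y_1|^2 - |y_2|^2 + |y_3|^2 - |y_4|^2,
\]
so $\mathcal J$ counts, in a weighted sense, configurations $(x,y_1,y_2,y_3,y_4)\in E^5$ obeying this relation. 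Equivalently, after Fourier inversion in $x$ and in the $y_i$, $\mathcal J$ rewrites as a Mattila-type integral for the pinned distance problem for $\mu\otimes\mu$ on $\R^{2d}$, but with the pin taken on the $s$-dimensional diagonal measure $(x,x)_*\mu$ (Fourier transform $\hat\mu(\xi+\eta)$ in $\R^{2d}$) rather than on $\mu\otimes\mu$ itself. Using the spherical decay $|\widehat{\sigma^{2d}_r}(\zeta)|\lesssim(r|\zeta|)^{-(2d-1)/2}$, the Frostman energy bound $\int_{|\xi|\sim R}|\hat\mu|^2\,d\xi\lesssim R^{d-s}$, dyadic decomposition in $|\xi|$ and $|\eta|$, and a Cauchy--Schwarz in the sum variable $\xi+\eta$, one reduces everything to a single scalar inequality which holds precisely when $4s > 2d+1$, i.e.\ $s>d/2+\frac14$.

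\textbf{Main obstacle.} The technical core is the \emph{diagonal pinning}. A direct application of the classical Mattila/Liu pinned-distance bound in $\R^{2d}$ to $\mu\otimes\mu$ would pin at a $(\mu\otimes\mu)$-generic point and would only require $2s > d+\frac14$, i.e.\ $s > d/2+\frac18$, which is strictly weaker than our threshold. Restricting the pin to the lower-dimensional diagonal inserts an asymmetric Fourier weight involving $\hat\mu(\xi+\eta)$ in place of the usual $|\widehat{\mu\otimes\mu}|^2$ factor, and absorbing it without losing the dimensional budget — in particular keeping the argument uniform down to $d=1$, where many standard spherical estimates degenerate — is the principal challenge. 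This is exactly where the gap between the ``expected'' threshold $d/2+\frac18$ and the actual threshold $d/2+\frac14$ materializes.
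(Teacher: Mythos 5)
The core of your argument --- the claim that $\mathcal J=\int_E\|\nu_x\|_{L^2}^2\,d\mu(x)<\infty$ whenever $s>\frac d2+\frac14$, in every dimension $d\ge1$ --- is asserted rather than proved, and the route you sketch for it does not close at that threshold when $d\ge2$. Your reduction places you exactly in the framework of a pinned two-set distance estimate in $\R^{2d}$ with $A=E\times E$ (dimension $2s$) and the pin taken on the diagonal measure $B=(x,x)_*\mu$ (dimension $s$). The Mattila/Liu-type condition this produces is ${\rm dim}(A)+\beta(s)>2d$, where $\beta(s)$ is the decay rate of the spherical $L^2$ averages of $\widehat{(x,x)_*\mu}$. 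Your ``single scalar inequality $4s>2d+1$'' tacitly uses the full curvature decay $\beta=\frac{2d-1}{2}$, but a measure of dimension $s$ cannot have spherical-average decay exceeding $s$: one only gets $\beta(s)=\min\bigl(s,\tfrac{2d-1}{2}\bigr)$ (this is Mattila's bound; compare Lemma \ref{lemmaforB}). Since $\frac d2+\frac14<d-\frac12$ for $d\ge2$, the regime you need forces $\beta(s)=s$, and the condition becomes $3s>2d$, i.e.\ $s>\frac{2d}{3}$, which is strictly worse than $\frac d2+\frac14$ for every $d\ge2$. Only in $d=1$ do the two thresholds coincide (at $\frac34$), which is precisely why the paper proves the pinned statement only for $d=1$ (via Liu's group-action theorem applied to $A=E_1\times E_2$, $B=\mathcal{D}_{E\times E}$). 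Your stronger uniform pinned claim would be a genuinely new result, and it does not follow from ``spherical decay + Frostman energy + Cauchy--Schwarz'' as stated.

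Consequently the unpinned part for $d\ge2$ is also not established, since you derive it from $\mathcal J<\infty$. The paper's mechanism for the unpinned result at $\frac d2+\frac14$ is entirely different and is the real content of the proof: it is an $L^\infty$ bound on the measure of the annulus $\{(x,y,z):|(y,z)-(x,x)|\in[t-\eps,t+\eps]\}$ (estimate (\ref{mainestimate})), obtained from $L^2_{-\beta_1}\times L^2_{-\beta_2}\to L^1$ Sobolev bounds for the bilinear spherical averaging operator $A^\eps_t$ \emph{and} for a companion operator $B^\eps_t$ (needed because the trilinear form lacks the symmetry available for triangle averages), combined with analytic interpolation via the three-lines lemma in a complex parameter $\alpha$ that shifts smoothness between the pin variable $x$ and the variable $y$. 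This asymmetric interpolation is exactly what beats the $\frac{2d}{3}$ barrier that your $L^2$ route runs into, and nothing in your sketch substitutes for it. (The handling of the excluded diagonal $y=z$ is also slightly glib --- the image of $\{y=z\}$ under $\Psi_x$ need not be $\nu_x$-null even though $\{y=z\}$ is $(\mu\otimes\mu)$-null --- but this is easily repaired by passing to separated subsets $E_1,E_2$ as in Lemma \ref{lem: subsets}.)
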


Our results are among the first efforts to investigate singular variant of Falconer distance problems. While the draft of this paper was in the final stages of preparation, the paper \cite{GPP2023} appeared, where one of our main problems in the Euclidean setting was studied independently. Their argument is based on a different approach, but our dimensional thresholds are considerably less restrictive. 

In order to obtain the above theorems, we introduce four different approaches, each of which has its own unique advantages. One of the main purposes of this article is to showcase how the plethora of methods can be applied to this type of singular variants of the distance problem. 

Our first approach is based on the observation that the pinned box set can be treated as a sum set of classical pinned distance sets. It not only implies Theorem \ref{thm: interval} but can also be used as a black box to turn future improvement for the classical Falconer distance problem to that for the box problem discussed here (see Theorem \ref{sumsettheorem} for the precise statement). Indeed, the dimensional thresholds stated in Theorem \ref{thm: interval} match precisely the best known thresholds for the Falconer distance problem (obtained in \cite{GIOW,DZ19,DIOWZ,DGOWWZ}). Moreover, this approach is powerful enough to show that the box set has non-empty interior. Note that this approach only works in dimension $d\geq 2$, since the Falconer distance problem becomes trivial in $d=1$ (More precisely, there exists set $E\subset \mathbb{R}$ with arbitrarily large Hausdorff dimension such that its distance set has zero Lebesgue measure).

Our next approach is based on a Sobolev norm bound for the bilinear spherical averaging operator (Theorem \ref{sobolevestimates} and \ref{sobolevestimatesforB}), which is the key ingredient in the proof of the first part of Theorem \ref{thm: measure}. The bilinear spherical averaging operator and its associated maximal functions have attracted a great amount of attention in recent years, see for instance \cite{JL,borgesfoster,BFOPZ,PSsparse, MCZZ,dosidisramos} for its $L^p$ improving estimates, $L^p$ norm bounds, sparse bounds, etc. Our Sobolev norm bounds for this operator is among the first of its kind (see \cite{BFOPZ} for another type of such bound in $d=1$ and \cite{GPP2023} for an $L^2\times L^{2}\rightarrow L^2$ variant) and is expected to be of independent interest.

The second part of Theorem \ref{thm: measure} follows from our third approach, which is based on the connection of the box problem to the distance problem concerning two different sets (for example see the discussion on the sharpness example given below). More precisely, the question here can be reduced to understanding the dimensional threshold needed for the join distance set $\Delta(A, B):=\{|x-y|:\, x\in A,\, y\in B\}$ to have positive measure or non-empty interior. This question is interesting in its own right and we will show that such result in certain ranges of dimensions works very effectively in the study of the box problem. Indeed, this approach produces the pinned version of the equally strong result as the above bilinear approach in dimension $d=1$. However, it does not work as well in higher dimensions. We leave the detailed discussion to Section \ref{sec: product set}.

In addition, we introduce another approach relating the box set problem to the chains generated by a given set. Over the past decade, there has been a great amount of interest in understanding the size of the $k$ chain set, similarly as the distance set (which corresponds to the case $k=1$), see for instance \cite{BIT, OT2020} and the references therein. In Section \ref{sec: projection}, we will use a projection method to show that some of the box set results are implied by the corresponding results for the chains. This idea is quite robust and works for many other related problems beyond the box set. We will mention a few such applications in Section \ref{sec: projection}.

It is reasonable to conjecture that the sharp exponent for the $\Box(E)$ problem is $\frac{d}{2}$. To see this let $E_q$ denote the $q^{-\frac{d}{s}}$-neighborhood (where $s>0$ is to be determined later) of 
$$ \frac{1}{q} \left\{ {\mathbb{Z}}^d \cap {[0,q]}^d \right\}.$$

It is not difficult to see (see e.g. \cite{falconer85}) that if $q_i$ is a sequence of positive integers growing super exponentially, then the set $\cap_i E_{q_i}$ has Hausdorff dimension $s$. Let $A=E \times E$ and $B=\{(x,x):x \in E\}$. Let $A_q=E_q \times E_q$ and $B_q=\{(x,x): x \in E_q\}$. We have $\Box(E)=\Delta(A,B)=\{|u-v|: u \in A, v \in B\}$. Observe that 
$$ |\Delta(A_q,B_q)| \lesssim q^{-\frac{d}{s}} \cdot q^2.$$ This expression goes to $0$ if $s<\frac{d}{2}$, establishing $\frac{d}{2}$ as a reasonable conjectured exponent. 

With all three approaches described above, in order to guarantee the non-degeneracy of the box sets, we will need to do an initial reduction. We first record the following simple observation.

\begin{lem}\label{lem: subsets}
Let $d\geq 1$ and $E\subset \mathbb{R}^d$ be a compact set. Suppose $\mu$ is a Frostman measure supported on $E$ satisfying
\begin{equation}\label{eqn: ball}
\mu(B(x,r))\leq C_s r^s, \quad \forall x\in \mathbb{R}^d,\, \forall r>0
\end{equation}for some $s>0$. Then there exist two subsets $E_1, E_2\subset E$ with positive $\mu$ measure satisfying ${\rm dist}(E_1, E_2)\gtrsim_{s, d} 1$.
\end{lem}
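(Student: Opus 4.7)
\textbf{Plan for Lemma \ref{lem: subsets}.} The plan is to exploit the observation that a Frostman measure of dimension $s > 0$ cannot be concentrated in an arbitrarily small ball, so $\supp(\mu)$ must have a definite diameter bounded below purely in terms of $s$ and $C_s$; two well-separated pieces of $E$ can then be carved out by taking small neighborhoods around a near-diametric pair of points in $\supp(\mu)$.

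First I would normalize $\mu$ (rescaling and adjusting $C_s$ accordingly) so that $\mu(E) = 1$, the standard convention for a Frostman measure. The key quantitative step is then a lower bound on the diameter $D$ of $\supp(\mu)$: if $\supp(\mu) \subseteq B(x_0, r)$ for some $x_0 \in \R^d$, then \eqref{eqn: ball} forces $1 = \mu(E) \le C_s r^s$, so $r \ge C_s^{-1/s}$. Hence $D \ge D_0 := C_s^{-1/s}$, a positive constant depending only on $s$.

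Given this, I would use compactness of $E$ to choose two points $x_1, x_2 \in \supp(\mu)$ with $|x_1 - x_2| \ge D_0/2$, and define $E_i := E \cap B(x_i, D_0/10)$ for $i = 1, 2$. Each $E_i$ has strictly positive $\mu$-measure, because $x_i \in \supp(\mu)$ and by definition any open ball around a support point carries positive mass. Finally, the triangle inequality gives
\[
\text{dist}(E_1, E_2) \ge |x_1 - x_2| - \tfrac{D_0}{10} - \tfrac{D_0}{10} \ge \tfrac{3 D_0}{10} \gtrsim_s 1,
\]
which is the claimed separation.

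There is essentially no substantive obstacle here: the lemma is a short consequence of the Frostman bound together with the definition of support. The only minor subtlety is the quantitative dependence of the separation on the parameters, and the argument above in fact shows the slightly stronger statement that the implicit constant depends on $s$ alone, which is consistent with (and sharper than) the $\gtrsim_{s,d} 1$ appearing in the lemma.
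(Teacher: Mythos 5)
Your proof is correct, and it takes a genuinely different (if equally elementary) route from the paper's. Both arguments hinge on the same mechanism — the Frostman bound $1=\mu(E)\le C_s r^s$ prevents the mass from concentrating in a ball of radius much smaller than $C_s^{-1/s}$ — but the executions differ. The paper runs a dyadic pigeonhole: partition $[0,1]^d$ into cubes of side $2^{-k}$; either two non-adjacent cubes both carry positive mass (done), or all the mass lies in one cube and its neighbors, hence in a ball of radius $\sim \sqrt{d}\,2^{-k}$, which contradicts the ball condition once $k$ is large enough. You instead lower-bound the diameter of $\supp(\mu)$ directly by $D_0=C_s^{-1/s}$, select a near-diametric pair $x_1,x_2$ in the support, and set $E_i=E\cap B(x_i,D_0/10)$; positivity of $\mu(E_i)$ is immediate from the definition of support, and the triangle inequality gives the separation. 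Your version is arguably cleaner and, as you observe, yields a constant depending only on $s$ and $C_s$, whereas the paper's covering of a cube plus its neighbors introduces a harmless $\sqrt{d}$. Two cosmetic remarks: compactness of $E$ is not actually needed to select $x_1,x_2$, since the diameter is a supremum and any pair within $D_0/2$ of it suffices; and the rescaling to $\mu(E)=1$ should be flagged as the standard Frostman normalization (the paper uses it implicitly as well), with the separation constant then depending on $C_s$ exactly as in the paper.
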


\begin{proof}
Without loss of generality, assume that $E$ is contained in the unit cube. For every integer $k\geq 1$, one can divide $[0,1]^d$ into $2^{dk}$ dyadic sub-cubes. There must be some dyadic sub-cube $Q$ such that $\mu(Q)>0$. If there is another non-adjacent dyadic sub-cube $Q'$ satisfying $\mu(Q')>0$, then we are done. Otherwise, all the measure is contained in $Q$ and its neighbors. It is easy to see that the union of $Q$ and its neighbors are contained in a ball of radius $\left(\sqrt{d}+\frac{\sqrt{d}}{2} \right)2^{-k}$ centered at the center of $Q$. Hence, by the ball condition, one has that
\[
1=\mu(E)\lesssim C_{s, d} 2^{-ks}, 
\]which would lead to a contradiction for sufficiently large $k$.
\end{proof}

In view of this lemma, we will in fact work with two fixed disjoint subsets $E_1, E_2$ below and show that their joint box set $\Box(E_1, E_2):=\{|(x,x)-(y,z)|:\, y\in E_1,\, z\in E_2,\, x\in E\}$ (or the pinned version) has positive Lebesgue measure or contains an interval. This then immediately implies that the same conclusion holds true for the larger set $\Box(E)$.

\subsection*{Further extensions}

The methods we develop in this article are flexible enough to work for many different singular variants of the distance sets.

For instance, suppose $d\geq 1$, and $E,\,F,\,G$ are compact subsets of $\R^d$ with Hausdorff dimensions $s_E,s_F$ and $s_G$, respectively. Define the distance set
\begin{equation}\label{eqn: EFG}
   \Box_{G}(E, F)=\{|(x,x)-(y,z)|\colon (y,z)\in E\times F,\, x\in G\}
\end{equation}
Then one can still ask the question of what conditions on the Hausdorff dimensions $s_E,\,s_F,\,s_G$ one needs to impose so that $\mathcal{L}^{1}(\Box_{G}(E,F))>0$.

Moreover, there is nothing special about the bilinear setting or the choice of the diagonal. Many of our ideas extend readily to the $n$-linear version of the box set
\[
\Box_n(E):=\{|(y^1, y^2, \cdots,y^n)-(x,x,\cdots, x)|:\, x,y^i\in E, \,y^i\neq y^j \text{ if $i\neq j$}\}.
\]

More generally, the diagonal $(x,x,\cdots, x)$ may be replaced by a different singular subset of $E\times \cdots \times E$: $(L_1(x^1,\cdots, x^n),\cdots, L_n(x^1,\cdots, x^n))$, where $L_i$ is a linear form. For example, if one takes $n=3$, $L_1(x^1, x^2, x^3)=x^1$, $L_2(x^1, x^2, x^3)=x^2$, and $L_3(x^1,x^2,x^3)=x^1+x^2$ one gets the following variant of the distance set:
\[
\{|(y^1, y^2, y^3)-(x^1, x^2, x^1+x^2)|:\, y^i, x^i \in E\}.
\]

In addition, we will also be interested in the question of what happens when one replaces the Euclidean metric $|\cdot|$ in $\R^{2d}$ by a more general metric $\|\cdot\|_{K}$ generated by a symmetric bounded convex body $K$ in $\R^{2d}$ with a smooth boundary and everywhere non-vanishing Gaussian curvature. This question has been sought after for the classical Falconer distance problem in many works in the literature (e.g. \cite{IMT2012,GIOW,DIOWZ,HI2005}). Our methods also extend to this general setting.

Lastly, we will describe in Section \ref{sec: finite fields} how our main results can be obtained similarly in the finite fields setting. This initiates the study of such singular variants of the distance problem in this setting and further illustrates the flexibility of our framework.  

\subsection*{Acknowledgements} A.I. is supported in part by NSF DMS-2154232. Y.O. is supported in part by NSF DMS-2055008 and NSF DMS-2142221. The authors would like to thank Jill Pipher for many helpful discussions during the completion of the project.

\vskip.125in 

\section{The sum set approach}\label{sumsetsection}

\vskip.125in

In this section, we introduce our first approach via viewing the box set as a sum set of two distance sets. This enables us to prove Theorem \ref{thm: interval}.

\begin{thm} \label{sumsettheorem} Suppose that the Hausdorff dimension of $E$ is greater than $s_0$, where $s_0$ is the threshold such that the following statement is true:
if $A_1, A_2\subset \mathbb{R}^d$ are two separated compact sets with Hausdorff dimension greater than $s_0$, equipped with the probability measure $\mu_1, \mu_2$ respectively, then there is $A_2'\subset A_2$ with $\mu_2(A_2')>\frac{1}{2}$ such that the Lebesgue measure of the pinned distance set $\{|x-y|: y \in A_1\}$ is positive for $\mu_2$-almost every $x \in A_2'$. 

Then there is $x\in E$ such that $\Box_x(E)$ contains an interval. 
\end{thm}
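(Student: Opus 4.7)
The plan is to view $\Box_x(E)$ as essentially an arithmetic sum of two ordinary pinned distance sets of $E$, to which the hypothesis can be applied twice with the same pin $x$.

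First I would apply Lemma~\ref{lem: subsets} to $E$ to obtain separated compact subsets $E_1, E_2\subset E$, each of positive $\mu$-measure. Since $\mu|_{E_2}$, renormalized, still obeys the Frostman ball condition with the same exponent, I would apply Lemma~\ref{lem: subsets} a second time to $E_2$ to split it into separated compact subsets $F_2, F_3$, and set $F_1:=E_1$. The resulting $F_1, F_2, F_3$ are pairwise separated, each carries a Frostman probability measure of dimension exceeding $s_0$, and thus each satisfies the hypotheses imposed on the sets $A_1, A_2$ in the statement.

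Next I would pin $x\in F_1$ and let $y\in F_2$, $z\in F_3$ in the definition of $\Box_x(E)$; separation automatically gives $y\neq z$ and forces $|y-x|, |z-x|\in [c, R]$ for some $0<c<R<\infty$. I would then invoke the hypothesis twice: once with $(A_1, A_2)=(F_2, F_1)$ to produce a subset $F_1^{(2)}\subset F_1$ with $\mu(F_1^{(2)})>\mu(F_1)/2$ such that the pinned distance set $\Delta_x(F_2):=\{|x-y|:y\in F_2\}$ has positive Lebesgue measure for $\mu$-a.e.\ $x\in F_1^{(2)}$; and once with $(A_1, A_2)=(F_3, F_1)$ to produce an analogous $F_1^{(3)}$. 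A trivial inclusion-exclusion argument shows $F_1^{(2)}\cap F_1^{(3)}$ has positive $\mu$-measure, so I may fix a single pin $x$ in this intersection (outside the exceptional null sets) for which both $\Delta_x(F_2)$ and $\Delta_x(F_3)$ have positive one-dimensional Lebesgue measure.

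To finish, I would upgrade these two positive-measure sets to an interval inside $\Box_x(E)$ by a sum-set argument. Squaring is bi-Lipschitz on $[c, R]$, so $\Delta_x(F_2)^2$ and $\Delta_x(F_3)^2$ again have positive Lebesgue measure. Their convolution $\mathbf{1}_{\Delta_x(F_2)^2}\ast\mathbf{1}_{\Delta_x(F_3)^2}$ is continuous with strictly positive integral, hence strictly positive on some open interval $I$, and $I$ must lie in the arithmetic sum $\Delta_x(F_2)^2+\Delta_x(F_3)^2$. Applying $t\mapsto\sqrt{t}$, a diffeomorphism on $I\subset[2c^2,2R^2]$, then yields an interval inside $\{\sqrt{|y-x|^2+|z-x|^2}:y\in F_2,\,z\in F_3\}\subset\Box_x(E)$.

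\emph{Main obstacle.} The heart of the argument is clean; the only subtle point is the initial reduction splitting $E$ into three pairwise separated positive-measure pieces, which is what allows a single pin $x$ to simultaneously play the role of the pin in two independent invocations of the pinned Falconer hypothesis. The sum-set-to-interval step is routine continuity of convolution, so the whole strength of the conclusion really reduces to the pinned distance threshold $s_0$ supplied by the hypothesis.
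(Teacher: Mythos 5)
Your proposal is correct and follows essentially the same route as the paper: reduce to three pairwise separated positive-measure subsets via Lemma~\ref{lem: subsets}, invoke the pinned-distance hypothesis twice to find a single pin for which two pinned distance sets have positive measure, and conclude via the Steinhaus sum-set argument applied to the squared distances. The only cosmetic difference is that you spell out the bi-Lipschitz squaring and the convolution step explicitly, which the paper leaves implicit.
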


\vskip.125in 

\begin{proof}To prove Theorem \ref{sumsettheorem}, for every point $x$, observe that we may consider a slightly modified version of $\Box_x(E)$, namely 
$$ \Box_x'(E)=\{{|(y,z)-(x,x)|}^2: y,z \in E,\, y\neq z \}.$$ It is direct to see that $\Box_x(E)$ contains an interval if and only if $\Box'_x(E)$ does.

Similarly, if one further restricts $y,z$ to two disjoint subsets of $E$, chosen according to Lemma \ref{lem: subsets}, one has
\[
\Box_x'(E) \supset \Box_x'(E_1, E_2):=\{|(y,z)-(x,x)|^2:\, y\in E_1, \, z\in E_2\}.
\]In fact, we can choose $E_1, E_2$ such that even better property holds true. More precisely, applying Lemma \ref{lem: subsets} repeatedly, one can obtain three subsets $E_1, E_2, E_3\subset E$ such that each pair of two sets among the three are separated and there is probability measure $\mu_i$ on each $E_i$, $i=1,2,3$, satisfying (\ref{eqn: ball}) for $s>s_0$.

One also defines for a compact subset $A$ of ${\mathbb R}^d$, $d \ge 2$, and any fixed point $u$ 
$$\Delta_u'(A)=\{{|u-v|}^2: v \in A\}.$$ 

We then see for each fixed $x$ that 

$$ \Box'_x(E_1, E_2)=\Delta_x'(E_1)+\Delta_x'(E_2).$$ 

\vskip.125in 

By the classical Steinhaus theorem, $\Delta_x'(E_1)+\Delta_x'(E_2)$ contains an interval as long as each $\Delta_x'(E_i)$ has positive Lebesgue measure. According to our assumption, this is indeed the case for some common $x \in E_3$, hence the proof is complete. 
\end{proof}

%\textcolor{red}{Checking if I got it: so you are applying the new hypothesis of the theorem to A1=E1,A2=E3A_1=E_1,A_2=E_3 at first to find E′3⊂E3E_3'\subset E_3, such that L1(Δ′x(E1))>0\mathcal{L}^1(\Delta_x'(E_1))>0 for μ3\mu_3 a.e. x∈E′3x\in E_3', and then applying it again to the pair (A1,A2)=(E2,E3)(A_1,A_2)=(E_2,E_3) to find E3″E_3''\subset E_3 such that \mathcal{L}^1(\Delta_x'(E_2))>0\mathcal{L}^1(\Delta_x'(E_2))>0 for \mu_3\mu_3 a.e. x\in E_3''x\in E_3'', and because \mu_3(E_3'),\mu(E_3'')>1/2\mu_3(E_3'),\mu(E_3'')>1/2, E_3',E_3''E_3',E_3'' must intersect in a set of positive \mu_3\mu_3 measure. Is that right?} {\color{blue}{Exactly right!!}}

The above theorem can be used as a black box to turn pinned Falconer distance results into box set result. More precisely, in order to get Theorem \ref{thm: interval}, it suffices to recall that when ${\rm dim}(E)$ satisfies the given condition as stated in Theorem \ref{thm: interval}, the required pinned distance  result in the hypothesis of Theorem \ref{sumsettheorem} holds true (see \cite{GIOW, DIOWZ} for the even dimensional case and \cite{DGOWWZ, DZ19} for the odd dimensional case). 

\vskip.125in 

\begin{rem} The method above is very flexible. For example, the same conclusion holds if we replace $\Box(E)$ by 
$$ \Box'_n(E)=\{{|(y^1, \dots, y^n)-(x,x, \dots, x)|}^2: x, y^1, \dots, y^n \in E,\, y^i\neq y^j \text{ if }i\neq j\}. $$

Moreover, $(x,x,\dots,x)$ may be replaced by $(L_1(x^1, \dots, x^n), \dots, L^n(x^1, \dots, x^n))$, for a variety of linear forms $L_i$. 
For example, consider
$$\Box'(E)=\{|(y^1,y^2,y^3)-(x^1,x^2,x^1+x^2)|^2\colon x^i,y^j\in E,\, y^i\neq y^j \text{ if }i\neq j\}.$$ Following the same proof strategy as Theorem \ref{sumsettheorem}, it suffices to show that there are mutually separated subsets $E_1, E_2, E_3\subset E$ such that it is possible to find $x^1, x^2\in E$ satisfying $\Delta'_{x^1}(E_1)$, $\Delta'_{x^2}(E_2)$, and $\Delta'_{x^1+x^2}(E_3)$ all have positive Lebesgue measure. 

To get this, one can for example start with finding $x^2$ such that $\mathcal{L}^1(\Delta'_{x^2}(E_2))>0$. Then, fixing $x^2$, we claim that the same pinned distance set results (in \cite{GIOW, DZ19, DIOWZ, DGOWWZ}) hold for pinned point of the form $x+x^2$. Indeed, all these pinned distance results are obtained from showing certain $L^2$ estimate of the form
\[
\int\int |\sigma_t\ast \mu(x)|^2\,dt d\mu(x)<\infty
\]or some variant of it, where $\mu$ denotes the Frostman measure on $E$. It is direct to see that such bound is equivalent to
\[
\int\int |\sigma_t\ast \mu(x+x^2)|^2\,dt d\mu(x)<\infty.
\]Therefore, one can show that for $x^1$ in a subset of positive $\mu$ measure, both $\mathcal{L}^1(\Delta'_{x^1+x^2}(E))$  and $\mathcal{L}^1(\Delta'_{x^1}(E))$ are positive. The proof is then complete. We omit the details.
\end{rem} 

\vskip.125in 

\begin{rem}\label{lengthstwochains}
The sum set approach can be easily adapted to take care of the following family of norms in $\R^{2d}$. For $1\leq  p <\infty$ let $ \|(y,z)\|_{p}=(|y|^{p}+|z|^{p})^{1/p}$ where $|\cdot|$ still denotes the Euclidean distance in $\R^d$. Consider the corresponding pinned box set 
\begin{equation*}
    \Box_{x}^{\|\cdot\|_p}(E)=\{\|(y,z)-(x,x)\|_p\colon y,z\in E, y\neq z\}
\end{equation*}
If $d\geq 2$, and $E\subset \R^d$ with $\text{dim}(E)>\frac{d}{2}+\frac{1}{4}$ if $d$ is even or $\text{dim}(E)>\frac{d}{2}+\frac{1}{4}+\frac{1}{8d-4}$ if $d$ is odd, then there exists $x\in E $ such that $ \Box^{\|\cdot\|_p}_{x}(E)$ contains an interval. In particular, for $p=1$ that implies that the lengths of two chains with vertices in $E$ form a set of nonempty interior in $\R$ if $\text{dim}(E)$ is above those thresholds. Precisely, there is $x\in E$ such that the set 
$$\{|z-x|+|x-y|\colon y,z\in E,\, y\neq z\}$$
contains an interval.

\end{rem} 

\vskip.25in

  \section{The bilinear Sobolev norm bound method}\label{unpinned}

In this section, we prove the first part of Theorem \ref{thm: measure}. The argument is based on a new  Sobolev type estimates for bilinear spherical averaging operators acting on nonnegative functions and an approach relating this to the box set problem (more precisely the positive Lebesgue measure question). This is inspired by \cite{greenleafiosevich}, where Sobolev estimates for the triangle averaging operator acting on nonnegative functions where used to prove positive Lebesgue measure for the set of three point configurations in $\R^2$, when $\text{dim}_{\mathcal{H}}(E)>7/4$. 

Given a set $E\subset \mathbb{R}^d$ with Hausdorff dimension greater than $\frac{d}{2}+\frac{1}{4}$. For $s\in (\frac{d}{2}+\frac{1}{4}, {\rm dim}(E))$, one can find two disjoint subsets $E_1, E_2\subset E$ according to Lemma \ref{lem: subsets}, each of which has positive $s$-dimensional Hausdorff measure. It suffices to focus our study on the box set between $E_1, E_2$, since $\Box (E)\supset \Box_E (E_1, E_2)$ (defined in (\ref{eqn: EFG})).

Let $\mu$  be a Frostman measure on $E$, and $\mu_i$ be a Frostman measure on $E_i$, $i=1,2$, all satisfying the $s$-dimensional ball condition as in (\ref{eqn: ball}). The crucial estimate for this section will be
\begin{equation}\label{mainestimate}
      (\mu\times \mu_1 \times \mu_2)\{(x,y,z)\in E\times E_1\times E_2\colon |(y,z)-(x,x)|\in [t-\eps,t+\eps]\}\leq C\eps 
  \end{equation}
for any $t\sim 1$ where $C>0$ is independent of $\eps>0$ and $t\sim1$.
\begin{thm}\label{linftytheorem}
Assume $d\geq 1$. Let $E\subseteq [0,1]^{d}$ be a compact set. 
\begin{enumerate}
    \item If $\text{dim}_{\mathcal{H}}(E)>d/2+1/4$, then inequality (\ref{mainestimate}) holds.
    \item If $\text{dim}_{\mathcal{H}}(E)>d/2+1/4$, then $\mathcal{L}^{1}(\Box(E))>0$.
\end{enumerate}

\end{thm}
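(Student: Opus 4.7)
The plan is to first deduce part (2) from part (1) by a standard pushforward argument, and then prove (1) by interpreting the left-hand side as a trilinear form involving the bilinear spherical averaging operator on $\R^d$ and applying the Sobolev bounds already advertised as Theorems \ref{sobolevestimates} and \ref{sobolevestimatesforB}.

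For the reduction $(1)\Rightarrow(2)$: let $\nu$ denote the pushforward of $\mu \times \mu_1 \times \mu_2$ under the map $(x,y,z) \mapsto |(y,z)-(x,x)|$. Since $E$ is compact and $E_1, E_2$ are separated by Lemma \ref{lem: subsets}, the support of $\nu$ lies in an interval $[c_0, C_0]$ with $0 < c_0 < C_0 < \infty$. Inequality (\ref{mainestimate}) says exactly that $\nu([t-\eps, t+\eps]) \leq C\eps$ uniformly for $t \in [c_0, C_0]$, so $\nu$ is absolutely continuous with respect to Lebesgue measure with bounded density. As $\nu$ has positive total mass $\mu(E)\mu_1(E_1)\mu_2(E_2)$ and its support is contained in $\Box(E_1, E_2) \subset \Box(E)$, the conclusion $\mathcal{L}^1(\Box(E)) > 0$ follows immediately.

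For (1), I would smoothly approximate $\eps^{-1}\chi_{[t-\eps,t+\eps]}(|w|)$ on $\R^{2d}$ by a bump concentrated on the sphere of radius $t$; passing to polar coordinates on $\R^{2d}$ then identifies the LHS of (\ref{mainestimate}) with $2\eps \cdot t^{2d-1} \int B_t(\mu_1, \mu_2)(x)\, d\mu(x)$ up to a small remainder, where
\[
B_t(f,g)(x) := \int_{\mathbb{S}^{2d-1}} f(x - t\omega_1)\, g(x - t\omega_2)\, d\sigma(\omega_1, \omega_2)
\]
is the bilinear spherical averaging operator on $\R^d$ evaluated at the diagonal. Since $t \sim 1$, it suffices to show $T(t) := \int B_t(\mu_1, \mu_2)(x)\, d\mu(x) \lesssim 1$ uniformly in $t$. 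To bound $T(t)$, I would use Sobolev duality: $T(t) \leq \|B_t(\mu_1,\mu_2)\|_{H^\alpha(\R^d)} \cdot \|\mu\|_{H^{-\alpha}(\R^d)}$ for any $\alpha \geq 0$. Taking $\alpha$ just above $(d-s)/2$, where $s \in (d/2+1/4,\,\dim E)$ is the Frostman exponent, the second factor is controlled by the energy $I_s(\mu) = \int |\hat\mu(\xi)|^2 |\xi|^{s-d}\,d\xi < \infty$. The first factor is precisely what Theorems \ref{sobolevestimates}, \ref{sobolevestimatesforB} provide, uniformly in $t$, when applied to the nonnegative Frostman measures $\mu_1, \mu_2$, under the hypothesis $s > d/2 + 1/4$.

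The main obstacle is the bilinear Sobolev estimate itself; the trilinear-form reduction above is essentially bookkeeping. The Sobolev gain of order $(d-s)/2$ must be extracted from the Fourier decay $|\widehat{\sigma^{(2d)}}(\xi,\eta)| \lesssim (1 + |(\xi,\eta)|)^{-(2d-1)/2}$ of the surface measure on $\mathbb{S}^{2d-1}$, while handling the fact that $\mu_1, \mu_2$ are only Frostman measures rather than Schwartz functions. The sharp threshold $d/2 + 1/4$ reflects the specific geometry of the bilinear setting, paralleling the triangle-operator analysis of Greenleaf--Iosevich \cite{greenleafiosevich}, and the nonnegativity of the inputs is crucially exploited there.
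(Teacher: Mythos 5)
Your reduction of (2) to (1) is fine, and in fact slightly cleaner than the paper's: the paper covers $\Box(E)$ by intervals and disposes of the small-$t$ intervals using the ball condition on $\mu_1,\mu_2$, whereas you use the separation of $E_1,E_2$ from Lemma \ref{lem: subsets} to push the support of the distance measure away from $0$, after which boundedness of the density on $[c_0,C_0]$ gives the conclusion. The reduction of (1) to the uniform bound $\int A_t^{\eps}(\mu_1,\mu_2)\,d\mu\lesssim 1$ also matches the paper.

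The gap is in how you close this trilinear bound. You write $T(t)\leq \|A_t^{\eps}(\mu_1,\mu_2)\|_{H^{\alpha}}\,\|\mu\|_{H^{-\alpha}}$ and assert that the first factor ``is precisely what Theorems \ref{sobolevestimates}, \ref{sobolevestimatesforB} provide.'' It is not: those theorems control only the $L^1$ norm of the output, $\|A_t^{\eps}(f,g)\|_{L^1}\lesssim \|f\|_{L^2_{-\beta_1}}\|g\|_{L^2_{-\beta_2}}$ with $\beta_1+\beta_2=\frac{2d-1}{2}$; no positive Sobolev regularity of $A_t^{\eps}(\mu_1,\mu_2)$ is established anywhere, and proving an $L^2_{-\beta_1}\times L^2_{-\beta_2}\to H^{\alpha}$ bound is a genuinely different (and harder) statement — the decay of $\hat\sigma(\xi-\eta,\eta)$ is in $|\xi-\eta|+|\eta|$, not in the output frequency $|\xi|$, so the duality you invoke does not go through. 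What the paper actually does is distribute the total smoothing $\frac{2d-1}{2}$ among all three measures by analytic interpolation: it forms $F(\alpha)=\Lambda_t^{\eps}(\mu_{1,-\alpha}^{\delta},\mu_2^{\delta},\mu_{\alpha}^{\delta})$ with complex Riesz potentials, bounds $|F(\alpha)|$ on the line $\mathrm{Re}(\alpha)=\frac{2d-1}{4}$ by pairing $\|A_t^{\eps}(\cdot,\cdot)\|_{L^1}$ against $\|\mu_{\alpha}^{\delta}\|_{\infty}$ (Theorem \ref{sobolevestimates}), bounds it on $\mathrm{Re}(\alpha)=-\frac{2d-1}{4}$ by rewriting the form around the $y$-slot and using Theorem \ref{sobolevestimatesforB}, and then applies the three lines lemma at $\alpha=0$. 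The reason two operator estimates are needed at all is exactly the lack of symmetry of the trilinear form (unlike the triangle form in Greenleaf--Iosevich), a point your proposal does not engage with. As written, your argument for part (1) does not yield the threshold $\frac{d}{2}+\frac{1}{4}$, or indeed any bound, from the stated theorems.
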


Let $\sigma_r$ denote the normalized surface measure on $S^{2d-1}_r$, the sphere of radius $r$ centered at the origin in $\R^{2d}$. When $r=1$ we oftentimes drop the $r$ and write $\sigma=\sigma_1$. With this normalization one has the bilinear spherical averaging operator
\begin{equation}
    \begin{split}
        A_r(f,g)(x):=&\int_{S^{2d-1}}f(x-ry)g(x-rz)\,d\sigma(y,z)\\
        =&\int_{S^{2d-1}_r} f(x-y)g(x-z)\,d\sigma_r(y,z)\\
        =&\int_{\R^{2d}}\hat{f}(\xi)\hat{g}(\eta)\hat{\sigma}(r\xi,r\eta)e^{2\pi i x\cdot (\xi+\eta)}\,d\xi d\eta.
    \end{split}
\end{equation}

For any $\eps>0$, let $\sigma_r^{\eps}=\sigma_r*\rho_{\eps}$ where $\rho\in C^{\infty}_{0}(B^{2d}(0,1))$ is a non-negative non-increasing radial function satisfying $\int_{\R^{d}\times \R^d}\rho(y,z)\,dydz=1$ and $\rho_{\eps}(y,z):=\eps^{-d}\rho({\eps}^{-1}(y,z))$.

For $\eps, r>0$ we define
\begin{equation}
    A_{r}^{\eps}(f,g)(x):=\int_{\R^d} \int_{\R^d} f(x-y)g(x-z)\sigma_{r}^{\eps}(y,z)\,dydz.
\end{equation}
When $r=1$ we may also write $A_1^{\eps}(f,g)=A^{\eps}(f,g)$.

We first show that part (2) of Theorem \ref{linftytheorem} follows from part (1). In other words,

\begin{lem} Let $E\subseteq \R^d$ be a compact set for which estimate (\ref{mainestimate}) holds. Then, $\mathcal{L}^{1}(\Box(E))>0$.
\end{lem}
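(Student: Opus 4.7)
The plan is to pushforward the product Frostman measure to the real line and deduce absolute continuity of the pushforward from (\ref{mainestimate}), which is more than enough to conclude $\mathcal{L}^1(\Box(E))>0$.

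First, I would define $\Phi: E \times E_1 \times E_2 \to \mathbb{R}$ by $\Phi(x,y,z) = |(y,z)-(x,x)|$ and let $\nu := \Phi_\#(\mu \times \mu_1 \times \mu_2)$ be the induced Borel measure on $\mathbb{R}$. Its total mass is $\mu(E)\mu_1(E_1)\mu_2(E_2) > 0$, and by construction $\mathrm{supp}(\nu) \subset \Box_E(E_1, E_2) \subset \Box(E)$. Because $E$ is compact and $E_1, E_2$ are separated (so $|y-z| \geq c_0 > 0$, which, by the triangle and AM--QM inequalities, forces $|y-x|^2 + |z-x|^2 \geq c_0^2/2$ uniformly), there exist constants $0 < c \leq D < \infty$ with $\mathrm{supp}(\nu) \subset [c, D]$. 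After a harmless rescaling of $E$ we may assume $\mathrm{supp}(\nu)$ lies in the regime $t \sim 1$, so that (\ref{mainestimate}) translates directly into the measure bound $\nu([t-\eps, t+\eps]) \leq C\eps$ for every $t \in \mathrm{supp}(\nu)$ and every $\eps > 0$, with one uniform constant $C$.

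Next, a standard covering argument upgrades this pointwise level-set bound to absolute continuity of $\nu$ with respect to $\mathcal{L}^1$. Indeed, if $A \subset \mathbb{R}$ satisfies $\mathcal{L}^1(A) = 0$, then for each $\delta > 0$ there exists a countable cover of $A$ by intervals $\{I_j\}$ of lengths $2\eps_j$ with $\sum_j 2\eps_j < \delta$; applying (\ref{mainestimate}) term by term yields $\nu(A) \leq \sum_j \nu(I_j) \leq C\delta$, and letting $\delta \to 0$ gives $\nu(A) = 0$.

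Finally, if we had $\mathcal{L}^1(\Box(E)) = 0$, then the above would force $\nu(\Box(E)) = 0$, contradicting $\nu(\Box(E)) \geq \mu(E)\mu_1(E_1)\mu_2(E_2) > 0$. The only substantive point is the uniform separation from zero and from infinity of the distances $|(y,z)-(x,x)|$, which is what lets us invoke (\ref{mainestimate}) with a single constant; the remainder is a direct measure-theoretic consequence of the pointwise bound, so I do not expect any real obstacle in this reduction.
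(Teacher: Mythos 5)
Your argument is correct and is at heart the same as the paper's: both proofs amount to showing that the image measure of $\mu\times\mu_1\times\mu_2$ under $(x,y,z)\mapsto|(y,z)-(x,x)|$ gives mass at most $C\eps$ to every interval of length $2\eps$, and then running the standard covering argument. The one genuine point of divergence is how the regime $t\not\sim 1$ is handled, since (\ref{mainestimate}) is only asserted for $t\sim 1$: the paper keeps all of $\Box(E)$ in play, splits the cover into intervals with $t_l\leq r$ and $t_l\geq r$, and kills the small-$t_l$ contribution using the Frostman ball condition $\mu_i(B(x,2r))\lesssim r^s$; you instead observe that the separation ${\rm dist}(E_1,E_2)\gtrsim 1$ from Lemma \ref{lem: subsets} forces $|(y,z)-(x,x)|\geq c_0/\sqrt{2}$ uniformly, so the pushforward is supported in a fixed interval $[c,D]\subset(0,\infty)$ and the small-$t$ regime never arises. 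Both inputs (separation and the ball condition) are part of the standing setup of the section, so your shortcut is legitimate, and it is arguably cleaner; the ``harmless rescaling'' remark is unnecessary, though, since having the support in a fixed $[c,D]$ with $0<c\leq D<\infty$ is exactly what $t\sim 1$ means here.
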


 \begin{proof}
     %\textcolor{red}{(very similar to what was done in \cite{greenleafiosevich})}

Take $t_l,\eps_l>0$ such that $\Box(E)\subseteq \bigcup_{l} I_l$ where $I_l=[t_l-\eps_l,t_l+\eps_l]$ and $\eps_l\leq r$, $r$ small to be chosen later. Since $E$ is compact we can assume $t_l\lesssim 1$, and we just need to show that $\sum_{l}\eps_l\geq c$ for any such cover.
\begin{equation}
\begin{split}
     1=&(\mu\times \mu_1 \times \mu_2)(E\times E_1\times E_2)\\
    \leq &(\mu\times \mu_1\times \mu_2)(\{(x,y,z)\in E\times E_1\times E_2\colon |(y,z)-(x,x)|\in \bigcup_{l\colon t_l\leq r}I_l\}) \\
    +&\sum_{t_l\geq r} (\mu\times \mu_1\times \mu_2)(\{(x,y,z)\in E\times E_1\times E_2\colon |(y,z)-(x,x)|\in I_l\}).
\end{split}
\end{equation}

For the second term where $t_l\sim 1$ we get from (\ref{mainestimate}) that 
$$\sum_{t_l\geq r} (\mu\times \mu_1\times \mu_2)(\{(x,y,z)\in E\times E_1\times E_2\colon |(y,z)-(x,x)|\in I_l\})\leq C\sum_l \eps_l.$$
For the first one we note that since $0\leq t_l,\eps_l\leq r$, $|(y,z)-(x,x)|\leq 2r$ for any points $(x,y,z)$ in that set, so its measure will be bounded by 
\begin{equation}
    \begin{split}
       &\int_{E}(\mu_1\times \mu_2)(\{(y,z)\colon |(y,z)-(x,x)|\leq 2r\})\,d\mu(x)\\
       \leq &\int_{E}(\mu_1\times \mu_2)(\{(y,z)\colon |y-x|\leq 2r\text{ and }|z-x|\leq 2r\})\,d\mu(x)\\
    \leq &\mu_1(B(x,2r))\mu_2(B(x,2r))\mu(E)\lesssim r^{2s}.
    \end{split}
\end{equation}
Choosing $r>0$ sufficiently small one have the desired inequality, say $1/2\leq C\sum_l \eps_l$.
 \end{proof}
In what follows 
$$\|f\|_{L^2_{-\beta}}^2=\int_{\R^d} |\hat{f}(\xi)|^2|\xi|^{-2\beta}\,d\xi.$$

Our first Sobolev norm estimate of the section is the following.

\begin{thm}\label{sobolevestimates}
    Let $A^{\eps}_t$ be as above in $d\geq 1$, where $t\sim 1$. Then if $\beta_1+\beta_2=\frac{2d-1}{2},\,\beta_1,\,\beta_2\geq 0$,
    \begin{equation}
        \|A^{\eps}_t(f,g)\|_{L^{1}}\lesssim  \|f\|_{L^{2}_{-\beta_1}}\|g\|_{L^{2}_{-\beta_2}}
    \end{equation}
    for any $f,g\geq 0$, with constants independent of $\eps>0$ and $t\sim 1$.
\end{thm}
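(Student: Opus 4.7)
The plan is to turn $\|A^\eps_t(f,g)\|_{L^1}$ into a Fourier integral restricted to the codimension-$d$ antidiagonal $\eta=-\xi$ in frequency space, then combine the Fourier decay of the sphere in $\R^{2d}$ with Cauchy--Schwarz. The nonnegativity hypothesis $f,g\geq 0$ is essential, as it will let us remove the absolute value in the $L^1$ norm and thus reduce to a one-parameter Fourier integral on which the sphere's full $(2d-1)/2$ rate of decay is still available.

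First I would exploit nonnegativity. Since $\sigma_t^\eps=\sigma_t*\rho_\eps\geq 0$ and $f,g\geq 0$, the output $A^\eps_t(f,g)$ is nonnegative pointwise, so $\|A^\eps_t(f,g)\|_{L^1}=\int A^\eps_t(f,g)(x)\,dx$. Inserting the Fourier representation
\[
A^\eps_t(f,g)(x)=\int_{\R^{2d}} \hat f(\xi)\hat g(\eta)\hat\sigma_t^\eps(\xi,\eta)\, e^{2\pi i x\cdot(\xi+\eta)}\,d\xi\,d\eta
\]
and integrating in $x$ formally produces a Dirac mass along $\xi+\eta=0$, yielding
\[
\|A^\eps_t(f,g)\|_{L^1}=\int_{\R^d}\hat f(\xi)\,\hat g(-\xi)\,\hat\sigma_t^\eps(\xi,-\xi)\,d\xi.
\]
This identity is rigorous after first approximating $f,g$ by bounded compactly supported functions, in which case all integrals converge absolutely and Fubini applies directly to the nonnegative spatial representation of $A^\eps_t(f,g)$; one then limits.

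Next I would insert the Fourier decay of the sphere. Stationary phase on $S^{2d-1}\subset\R^{2d}$ (nonvanishing Gaussian curvature) gives $|\hat\sigma(v)|\lesssim(1+|v|)^{-(2d-1)/2}$, and writing $\hat\sigma_t^\eps(v)=\hat\sigma(tv)\hat\rho(\eps v)$ with $|\hat\rho|\leq 1$ yields, for $t\sim 1$,
\[
|\hat\sigma_t^\eps(\xi,-\xi)|\lesssim(1+|\xi|)^{-(2d-1)/2}\leq |\xi|^{-(2d-1)/2},
\]
uniformly in $\eps>0$. Splitting this weight as $|\xi|^{-\beta_1}|\xi|^{-\beta_2}$ (admissible since $\beta_1+\beta_2=(2d-1)/2$) and using $|\hat g(-\xi)|=|\hat g(\xi)|$ since $g$ is real, Cauchy--Schwarz closes the bound:
\[
\|A^\eps_t(f,g)\|_{L^1}\leq \int |\hat f(\xi)||\hat g(\xi)||\xi|^{-(2d-1)/2}\,d\xi \leq \|f\|_{L^2_{-\beta_1}}\|g\|_{L^2_{-\beta_2}}.
\]

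The main conceptual point, rather than a real obstacle, is that nonnegativity is precisely what permits slicing the Fourier integral to the antidiagonal $\{(\xi,-\xi)\}$: on this slice the mollified $\hat\sigma_t^\eps$ still decays at the full $(2d-1)/2$ rate in the single parameter $\xi\in\R^d$, which is exactly the budget needed for the split $\beta_1+\beta_2=(2d-1)/2$ into two $L^2_{-\beta_i}$ Sobolev factors. The only technical care needed is justifying the delta-function step by approximation and verifying the uniformity of the decay bound in $\eps>0$ and $t\sim 1$, both of which are standard.
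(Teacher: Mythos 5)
Your proof is correct and is essentially the paper's argument: both reduce $\|A^\eps_t(f,g)\|_{L^1}$, via nonnegativity, to a frequency integral concentrated on the antidiagonal $\xi+\eta=0$, then apply the $(1+|\xi|+|\eta|)^{-(2d-1)/2}$ decay of $\hat\sigma$ and Cauchy--Schwarz with the split $\beta_1+\beta_2=\frac{2d-1}{2}$. The only difference is bookkeeping: the paper tests against $\psi(x/R)$ so the delta function appears as $R^d\hat\psi(R(\xi+\eta))$ and the limit $R\to\infty$ is taken by monotone convergence at the end (which also sidesteps your need to justify truncating $f,g$ in the negative-order Sobolev norms), whereas you collapse the delta first and then apply Cauchy--Schwarz to the resulting single integral.
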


\begin{proof}
For simplicity of notation assume $t=1$. For $t\sim 1 $ one just replaces $\hat{\sigma^{\eps}}(\xi,\eta)$ with $\hat{\sigma^{\eps}}(t\xi,t\eta)$, but all the bounds still hold.

By writing 
$$A^{\eps}(f,g)(x)=\int_{\R^d\times \R^d} \hat{f}(\xi)\hat{g}(\eta)\hat{\sigma^{\eps}}(\xi,\eta)e^{2\pi ix\cdot(\xi+\eta)}\,d\xi d\eta$$
one can check that for a test function $\varphi\in \mathcal{S}(\R^d)$,
\begin{equation}
    \begin{split}
       \int_{\R^d} A^{\eps}(f,g)(x)\hat{\varphi}(x)\,dx=&\int\int \hat{f}(\xi)\hat{g}(\eta)\hat{\sigma^{\eps}}(\xi,\eta)\varphi(\xi+\eta)\,d\eta d\xi\\
       =&\int\left(\int \hat{f}(\xi-\eta)\hat{g}(\eta)\hat{\sigma^{\eps}}(\xi-\eta,\eta)\,d\eta\right) \varphi(\xi)\,d\xi 
    \end{split}
\end{equation}
so that the Fourier transform
$$\mathcal{F}(A^{\eps}(f,g))(\xi)=\int_{\R^d} \hat{f}(\xi-\eta)\hat{g}(\eta)\hat{\sigma^{\eps}}(\xi-\eta,\eta)\,d\eta.$$

Take a radial non-increasing function $\psi\in C^{\infty}_{0}(B^{d}(0,2))$, $\psi\geq 0$, $\psi\equiv 1$ in $\{|x|\leq 1\}$. Since $f,g\geq 0$ and also $\sigma^{\eps}(y,z)\geq 0$ in $\R^{2d}$, it is enough to prove that 
$$\int A^{\eps}(f,g)(x)\psi(x/R)\,dx\leq C\|f\|_{L^{2}_{-\beta_1}}\|g\|_{L^{2}_{-\beta_2}}$$
for any $R\geq 1$ and $\eps>0$.

\begin{equation}
    \begin{split}
        \int A^{\eps}(f,g)(x)\psi(x/R)\,dx=&\int\mathcal{F}(A^{\eps}(f,g))(\xi) R^d\hat{\psi}(R\xi)\,d\xi\\
        =&\int \hat{g}(\eta)\int\hat{f}(\xi-\eta)\hat{\sigma^{\eps}}(\xi-\eta,\eta) R^d\hat{\psi}(R\xi)\,d\xi d\eta\\
        =&\int\int \hat{g}(\eta)\hat{f}(\xi)\hat{\sigma^{\eps}}(\xi,\eta) R^d\hat{\psi}(R(\xi+\eta))\,d\xi d\eta.\\
    \end{split}
\end{equation}
Observe that 
\begin{equation}
    \begin{split}
     |\hat{\sigma^{\eps}}(\xi,\eta)|=|\hat{\sigma}(\xi,\eta)\hat{\rho}(\eps(\xi,\eta))|\leq & \|\hat{\rho}\|_{\infty} |\hat{\sigma}(\xi,\eta)|\\
     \lesssim & \|\rho\|_1\dfrac{1}{(1+|\xi|+|\eta|)^{\frac{2d-1}{2}}}
    \end{split}
\end{equation}
by the well known decay of $\hat{\sigma}_{2d-1}$ obtained from stationary phase arguments.

Using Cauchy-Schwarz inequality and the fact that $\beta_1+\beta_2=\frac{2d-1}{2}$, one gets that
\begin{equation}
    \begin{split}
        \int A^{\eps}(f,g)(x)\psi(x/R)\,dx\lesssim &\int\int \dfrac{|\hat{f}(\xi)||\hat{g}(\eta)|}{(1+|\xi|+|\eta|)^{\beta_1+\beta_2}}R^d|\hat{\psi}(R(\xi+\eta))|\,d\xi d\eta \\
        \lesssim & \left(\int |\hat{f}(\xi)|^2 \left(\int \dfrac{R^d|\hat{\psi}(R(\xi+\eta))|}{(1+|\xi|+|\eta|)^{2\beta_1}}\,d\eta \right)\,d\xi\right)^{1/2} \\
        & \quad\cdot \left(\int |\hat{g}(\eta)|^2 \left(\int \dfrac{R^d|\hat{\psi}(R(\xi+\eta))|}{(1+|\xi|+|\eta|)^{2\beta_2}}\,d\xi \right)\,d\eta\right)^{1/2}.
    \end{split}
\end{equation}

For any $\xi\in \R^d$, and $R\geq 1$, 
$$\int_{\R^d} \dfrac{R^d|\hat{\psi}(R(\xi+\eta))|}{(1+|\xi|+|\eta|)^{2\beta_1}}\,d\eta\leq \dfrac{1}{(1+|\xi|)^{2\beta_1}}\int R^d|\hat{\psi}(R(\xi+\eta))|\,d\eta=\dfrac{1}{(1+|\xi|)^{2\beta_1}}\|\hat{\psi}\|_1$$
and similarly for the symmetric term.

Therefore, 
\begin{equation}
    \begin{split}
        \int A^{\eps}(f,g)(x)\psi(x/R)\,dx\lesssim& \left( \int |\hat{f}(\xi)|^2 \dfrac{1}{(1+|\xi|)^{2\beta_1}} \,d\xi
        \right)^{1/2}\cdot \left(\int |\hat{g}(\eta)|^2  \dfrac{1}{(1+|\eta|)^{2\beta_2}} \,d\eta\right)^{1/2}\\
        \leq& \|f\|_{L^{2}_{-\beta_1}}\|g\|_{L^{2}_{-\beta_2}}.
    \end{split}
\end{equation}

\end{proof}

We will prove Theorem \ref{linftytheorem} using the Sobolev norm estimate obtained in the above. This part of the argument proceeds in a similar fashion as the study of the triangle averaging operator done in \cite{greenleafiosevich}. Our bilinear spherical averaging operator lacks a certain symmetry which was exploited there, bringing in a new challenge that will be described below. To resolve this, we will need to derive the Sobolev norm estimate for another bilinear operator that is closely related to $A_t$. 

\begin{thm}\label{sobolevestimatesforB}
    Let $d\geq 1$, $\varepsilon>0$ and $t\sim 1$. Define
    \begin{equation}
    B_t^{\eps}(f,g)(y):=\int \sigma_t^{\eps}(x-y,x-z)f(z)g(x)\,dzdx.
\end{equation}
Then if $\beta_1+\beta_2=\frac{2d-1}{2},\,\beta_1,\,\beta_2\geq 0$,
    \begin{equation}
\|B^{\eps}_t(f,g)\|_{L^{1}}\lesssim  \|f\|_{L^{2}_{-\beta_1}}\|g\|_{L^{2}_{-\beta_2}}
    \end{equation}
    for any $f,g\geq 0$, with constants independent of $\eps>0$ and $t\sim 1$.
\end{thm}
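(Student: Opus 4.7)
My plan is to mirror the Fourier-analytic strategy from the proof of Theorem \ref{sobolevestimates}; the proof will be almost identical once $B_t^\eps$ is placed into a convenient Fourier form. Since $f, g \geq 0$ and $\sigma_t^\eps \geq 0$ on $\R^{2d}$, the output $B_t^\eps(f,g)$ is nonnegative, so as in the proof of Theorem \ref{sobolevestimates} it suffices to bound $\int B_t^\eps(f,g)(y) \, \psi(y/R) \, dy$ uniformly in $R \geq 1$, $\eps > 0$, and $t \sim 1$, where $\psi$ is the same radial bump as before.

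The first step is a change of variables: substituting $a = x-y$ and $b = x-z$ (so $x = y+a$ and $z = y+a-b$) yields the representation
\begin{equation*}
B_t^\eps(f,g)(y) = \int \sigma_t^\eps(a,b) \, f(y+a-b) \, g(y+a) \, da \, db.
\end{equation*}
Expanding $f$ and $g$ via Fourier inversion, integrating against $\psi(y/R)$, and then setting $\rho = \zeta - \eta$ to diagonalize the resulting expression produces
\begin{equation*}
\int B_t^\eps(f,g)(y) \, \psi(y/R) \, dy = \int \widehat{\sigma_t^\eps}\bigl(-(\eta+\rho),\eta\bigr) \, \hat{f}(\eta) \, \hat{g}(\rho) \, R^d \, \hat{\psi}\bigl(-R(\eta+\rho)\bigr) \, d\eta \, d\rho.
\end{equation*}

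The only new feature compared with the proof of Theorem \ref{sobolevestimates} is that $\widehat{\sigma_t^\eps}$ is now evaluated at the asymmetric pair $(-(\eta+\rho),\eta)$ rather than at $(\xi,\eta)$. The key observation is that, by the triangle inequality $|\eta+\rho| + |\eta| \geq |\rho|$ together with the trivial bound $|\eta+\rho| + |\eta| \geq |\eta|$, the stationary-phase decay $|\widehat{\sigma_t^\eps}(\xi,\eta)| \lesssim (1+|\xi|+|\eta|)^{-(2d-1)/2}$ still factorizes cleanly as
\begin{equation*}
\bigl|\widehat{\sigma_t^\eps}(-(\eta+\rho),\eta)\bigr| \lesssim (1+|\eta|)^{-\beta_1}\,(1+|\rho|)^{-\beta_2}
\end{equation*}
for any $\beta_1,\beta_2 \geq 0$ with $\beta_1 + \beta_2 = \frac{2d-1}{2}$. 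Substituting this bound, applying Cauchy-Schwarz with the factor $R^d|\hat{\psi}(-R(\eta+\rho))|$ split between the two halves, and using $\int R^d|\hat{\psi}(R(\eta+\rho))| \, d\rho = \|\hat{\psi}\|_1$ (and the analogous identity in $\eta$), one arrives at the desired bound after finally dominating the inhomogeneous weights $(1+|\cdot|)^{-2\beta_j}$ by the larger homogeneous ones $|\cdot|^{-2\beta_j}$.

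I do not anticipate any genuine obstacle: the asymmetric placement of $\eta+\rho$ and $\eta$ in the argument of $\widehat{\sigma_t^\eps}$ is the sole point of departure from the proof of Theorem \ref{sobolevestimates}, and is absorbed by a single application of the triangle inequality. Uniformity in $t \sim 1$ is handled exactly as before, by replacing $\widehat{\sigma^\eps}(\xi,\eta)$ with $\widehat{\sigma^\eps}(t\xi, t\eta)$ in the pointwise decay estimate without affecting any of the bounds.
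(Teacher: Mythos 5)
Your proposal is correct and follows essentially the same route as the paper: reduce via nonnegativity to testing against $\psi(\cdot/R)$, pass to the bilinear Fourier representation of $B_t^{\eps}(f,g)$, invoke the stationary-phase decay of $\hat{\sigma}$, split the exponent as $\beta_1+\beta_2$, and conclude by Cauchy--Schwarz. Your symmetric factorization $\bigl|\widehat{\sigma_t^{\eps}}(-(\eta+\rho),\eta)\bigr|\lesssim(1+|\eta|)^{-\beta_1}(1+|\rho|)^{-\beta_2}$ via the triangle inequality is just a tidier packaging of the paper's change of variables together with its bound $1+|\xi|+|\eta-\xi|\gtrsim 1+|\eta|$.
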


\begin{proof}
We first claim that 
$$\mathcal{F}(B_t^{\eps}(f,g))(\xi)=\int \hat{f}(-\eta)\hat{g}(\xi+\eta)\hat{\sigma_t^{\eps}}(-\xi,-\eta)\,d\eta.$$
Indeed for $\varphi\in \mathcal{S}(\R^d)$, we can compute formally that
\begin{equation}
    \begin{split}
        \int B_t^{\eps}(f,g)(y)\hat{\varphi}(y)\,dy=&
        \int \int\int \sigma_{t}^{\eps}(x-y,x-z)f(z)g(x)\,dzdx \,\hat{\varphi}(y)dy\\
        =&\int \left(\int\int \sigma_t^{\eps}(x-y,x-z)\hat{\varphi}(y)f(z)\,dydz\right) g(x)\,dx\\
        =&\int \left(\int \int \hat{\sigma_t^{\eps}}(\xi,\eta)\varphi(-\xi)\hat{f}(\eta)e^{2\pi ix\cdot (\xi+\eta)}\,d\xi d\eta\right) g(x)\,dx \\
        =&\int \int\hat{\sigma_t^{\eps}}(\xi,\eta)\varphi(-\xi)\hat{f}(\eta) \check{g}(\xi+\eta)\,d\xi d\eta\\
        =&\int \int \hat{\sigma_t^{\eps}}(-\xi,\eta)\varphi(\xi)\hat{f}(\eta) \check{g}(-\xi+\eta)\,d\xi d\eta\\
        =&\int \varphi(\xi)\left(\int\hat{\sigma_t^{\eps}}(-\xi,\eta)\hat{f}(\eta) \check{g}(-\xi+\eta) \,d\eta\right)\, d\xi\\
         =&\int \varphi(\xi)\left(\int\hat{\sigma_t^{\eps}}(-\xi,-\eta)\hat{f}(-\eta) \hat{g}(\xi+\eta) \,d\eta\right) \,d\xi.
    \end{split}
\end{equation}
    
Hence, let $\psi$ be the same radial bump function as in the proof of Theorem \ref{sobolevestimates}, then one has for all $R\geq 1$ that
\begin{equation}
    \begin{split}
        \int B_t^{\eps}(f,g)(y)\psi(y/R)\,dy=&\int \left(\int \hat{f}(-\eta)\hat{g}(\xi+\eta)\hat{\sigma_t^{\eps}}(-\xi,-\eta)\,d\eta\right) R^d \hat{\psi}(R\xi)\,d\xi\\
        \lesssim & \int \int |\hat{f}(-\eta)||\hat{g}(\xi+\eta)|\dfrac{1}{(1+|\xi|+|\eta|)^{\frac{2d-1}{2}}}R^d |\hat{\psi}(R\xi)|\,d\eta d\xi.\\
    \end{split}
\end{equation}
Since $\beta_1+\beta_2=\dfrac{2d-1}{2}$, the above is 
\begin{equation}
    \begin{split}
        \lesssim & \left(\int |\hat{f}(-\eta)|^2\big(\int \dfrac{1}{(1+|\xi|+|\eta|)^{2\beta_1}} R^d|\hat{\psi}(R\xi)|\,d\xi\big)d\eta\right)^{1/2}\\
    &\quad\cdot\left(\int \int |\hat{g}(\xi+\eta)|^2 \dfrac{1}{(1+|\xi|+|\eta|)^{2\beta_2}} R^d|\hat{\psi}(R\xi)|\,d\xi d\eta\right)^{1/2}\\
        \lesssim & \left(\int |\hat{f}(-\eta)|^2 \dfrac{1}{(1+|\eta|)^{2\beta_1}}\big(\int R^d |\hat{\psi}(R\xi)|\,d\xi\big)\, d\eta\right)^{1/2}\\
        &\quad \cdot\left(\int \big(\int |\hat{g}(\xi+\eta)|^2 \dfrac{1}{(1+|\xi|+|\eta|)^{2\beta_2}}\,d\eta\big) R^d |\hat{\psi}(R\xi)| \,d\xi\right)^{1/2}\\
        \lesssim & \|\hat{\psi}\|_1^{1/2} \|f\|_{L^{2}_{-\beta_1}}\left(\int \big(\int |\hat{g}(\eta)|^2 \dfrac{1}{(1+|\xi|+|\eta-\xi|)^{2\beta_2}}\,d\eta\big) R^d |\hat{\psi}(R\xi)| \,d\xi\right)^{1/2}\\
        \lesssim& \|\hat{\psi}\|_1\|f\|_{L^2_{-\beta_1}}\|g\|_{L^2_{-\beta_2}}.
    \end{split}
\end{equation}
For the last inequality above we used that 
\begin{equation}
    \begin{split}
        \int R^d |\hat{\psi}(R\xi)|\dfrac{1}{(1+|\xi|+|\eta-\xi|)^{2\beta_2}}\,d\xi\lesssim &\dfrac{1}{(1+|\eta|)^{2\beta_2}}\int R^d|\hat{\psi}(R\xi)|\,d\xi\\
        \lesssim & \|\hat{\psi}\|_1\dfrac{1}{(1+|\eta|)^{2\beta_2}}
    \end{split}
\end{equation}
because 
$$1+|\xi|+|\eta-\xi| \gtrsim 1+|\eta|$$
One can easily check the inequality by dividing into the cases $|\xi|\geq |\eta|/2$ or $|\eta|>2|\xi|$. By the Monotone Convergence Theorem, the proof is complete after taking $R\to \infty$. 

\end{proof}

With the above two theorems in tow, we are now ready to prove our main Theorem \ref{linftytheorem}.

\begin{proof}[Proof of Theorem \ref{linftytheorem}]

    Let $t\sim 1$ and $\eps>0$. Observe that
    \begin{equation}
        \begin{split}
              (\mu\times \mu_1 \times \mu_2)&\{(x,y,z)\in E\times E_1\times E_2\colon |(x,x)-(y,z)|\in [t-\eps,t+\eps]\} \\
              =&\int_{E\times E_1\times E_2} \chi_{[t-\eps,t+\eps]}(|(x-y,x-z)|)\,d\mu_1(y)d\mu_2(z)d\mu(x)\\
              \lesssim &\int_{E\times E_1\times E_2} \eps\sigma_t^{\eps}(x-y,x-z)\,d\mu_1(y)d\mu_2(z)d\mu(x).\\
        \end{split}
    \end{equation}
    So we are reduced to showing 
    \begin{equation}\label{eqn: linftygoal}
        \int_{E\times E_1\times E_2} \sigma_t^{\eps}(x-y,x-z)\,d\mu_1(y)d\mu_2(z)d\mu(x)\lesssim 1
    \end{equation}
    uniformly in $\eps>0$ and $t\sim1$.
    
In analogy to what is done in \cite{greenleafiosevich}, we define  
$$\Lambda_t^{\eps}(f_1,f_2,f_3):=\int \int \int \sigma_t^{\eps}(x-y,x-z)f_1(y)f_2(z)f_3(x)\,dydzdx
=\langle A_t^{\eps}(f_1,f_2),f_3\rangle$$
where 
$$A_t^{\eps}(f_1,f_2)(x)=\int\int \sigma_t^{\eps}(x-y,x-z)f_1(y)f_2(z)\,dydz.$$

In \cite{greenleafiosevich}, a similar trilinear form is studied but their trilinear form has also the nice property that
$$\Lambda_{t}^{\eps}(f_1, f_2,f_3)=\Lambda_{\tilde{t}}^{\eps}(f_3,f_2,f_1)$$
where $t=(t_{12},t_{13},t_{23})$ and $\tilde{t}$ is a permutation of that vector. Our trilinear form does not have that symmetry, so we will have to treat the cases $\text{Re}(\alpha)>0$ and $\text{Re}(\alpha)<0 $ separately in the argument below.

For $\delta>0$, let $\hat{\mu^{\delta}}(\xi)=\hat{\mu}(\xi)\hat{\rho}(\delta\xi)$, where $\rho\in C^{\infty}_{0}(B^{d}(0,1))$ non-negative radial function with $\int \rho(y) dy=1$

Define for any probability measure $\nu$ and $\alpha\in \C,$
$$\nu_{\alpha}(x)=\dfrac{2^{-\alpha}\pi^{-d/2}}{\Gamma(\alpha/2)}(\nu*|\cdot|^{-d+\alpha})(x)$$
This is initially well defined for $0<\text{Re}(\alpha)<d$, and it is extended to the complex plane by analytic continuation so that 
$$\hat{\nu}_{\alpha}(\xi)=\dfrac{(2\pi)^{-\alpha}}{\Gamma(\frac{d-\alpha}{2})}|\xi|^{-\alpha}\hat{\nu}(\xi)$$(see \cite{Grafakosbook} for the choice of constants).

Consider the complex function
$$F(\alpha)=\Lambda_{t}^{\eps}(\mu_{1,-\alpha}^{\delta},\mu_2^{\delta},\mu_{\alpha}^{\delta}),\quad\alpha\in \C.$$

First, let us estimate $|F(\alpha)|$ for $\text{Re}(\alpha)>0$.

If $\text{dim}_{\mathcal{H}}(E)>d-\text{Re}(\alpha)$, then $\|\mu_{\alpha}^{\delta}\|_{\infty}\lesssim 1$. Indeed, from the same argument as in \cite{greenleafiosevich}
\begin{equation}\label{independentofdelta}
\begin{split}
     \mu_{\alpha}^{\delta}(x)\lesssim \int |x-y|^{-d+\text{Re}(\alpha)}\,d\mu^{\delta}(y)\lesssim & \sum_{m\geq 0} 2^{m(d-\text{Re}(\alpha))}\int_{|x-y|\sim 2^{-m}}d\mu^{\delta}(y)\\
     \lesssim & \sum_{m\geq 0} 2^{m(d-\text{Re}(\alpha))}2^{-ms}\lesssim 1
\end{split}
\end{equation}
for $d-\text{Re}(\alpha)<s<\text{dim}_{\mathcal{H}}(E)$, independently of $\delta>0$. Combining that with the bounds in Theorem \ref{sobolevestimates}, one has
\begin{equation}
    |F(\alpha)|\leq \|A_t^{\eps}(\mu_{1,-\alpha}^{\delta},\mu_2^{\delta})\|_1\|\mu_{\alpha}^{\delta}\|_{\infty}\lesssim \|\mu_{1,-\alpha}^{\delta}\|_{L^{2}_{-\beta_1}}\|\mu_2^{\delta}\|_{L^{2}_{-\beta_2}}
\end{equation}
for all $\beta_1,\beta_2\geq 0$, with $\beta_1+\beta_2=\dfrac{2d-1}{2}$.

By Plancherel's Theorem and using that $|\hat{\rho}(\delta\xi)|\leq \|\hat{\rho}\|_{\infty}\leq \|\rho\|_1<\infty$,
\begin{equation}
    \begin{split}
    \|\mu_{1,-\alpha}^{\delta}\|_{L^{2}_{-\beta_1}}\|\mu_2^{\delta}\|_{L^{2}_{-\beta_2}}\lesssim &\||\xi|^{\alpha-\beta_1}\hat{\mu}_1(\xi)\hat{\rho}(\delta\xi)\|_{2}\||\xi|^{-\beta_2}\hat{\mu}_2(\xi)\hat{\rho}(\delta \xi)\|_2\\
    \lesssim & \||\xi|^{\text{Re}(\alpha)-\beta_1}\hat{\mu}_1(\xi)\|_{2}\||\xi|^{-\beta_2}\hat{\mu}_2(\xi)\|_2=\||\xi|^{-\beta_2}\hat{\mu}_1(\xi)\|_2\||\xi|^{-\beta_2}\hat{\mu}_2(\xi)\|_2\\
    \end{split}
\end{equation}
if we choose $\text{Re}(\alpha),\beta_1,\beta_2$ such that $\text{Re}(\alpha)-\beta_1=-\beta_2$.

Finally, one has
\begin{equation}
    \begin{split}
      \||\xi|^{-\beta_2}\hat{\mu}_1(\xi)\|_2=&\left(\int \dfrac{|\hat{\mu}_1(\xi)|^{2}}{|\xi|^{2\beta_2}}\,d\xi\right)^{1/2}  \\
      \sim&\left(\int \dfrac{1}{|x-y|^{d-2\beta_2}}\,d\mu_1(x)d\mu_1(y)\right)^{1/2}\lesssim 1
    \end{split}
\end{equation}and similarly $\||\xi|^{-\beta_2}\hat{\mu}_2(\xi)\|_2\lesssim 1$, as long as $\text{dim}_{\mathcal{H}}(E)>d-2\beta_2$.
Collecting all the conditions we want to be satisfied, we have

\begin{enumerate}
    \item $ \beta_1,\beta_2\geq 0\text{ and }\beta_1+\beta_2=\dfrac{2d-1}{2}$;
    \item $\beta_2=\beta_1-\text{Re}(\alpha)$;
    \item $d-\text{Re}(\alpha)<\text{dim}_{\mathcal{H}}(E)$ ;
    \item   $ d-2\beta_2<\text{dim}_{\mathcal{H}}(E)$.
\end{enumerate}
Combining (1) and (2), we get $2\beta_2=\frac{2d-1}{2}-\text{Re}(\alpha)$, so (3) and (4) become 
\begin{equation}
    \begin{cases}
    d-\text{Re}(\alpha)<\text{dim}_{\mathcal{H}}(E)\\
    \text{Re}(\alpha)+1/2<\text{dim}_{\mathcal{H}}(E).
    \end{cases}
\end{equation}
Hence for any $\alpha\in \C$ such that $d-\text{Re}(\alpha)=\text{Re}(\alpha)+1/2$, that is, $\text{Re}(\alpha)=\dfrac{2d-1}{4}$, this gives $\beta_2=\dfrac{2d-1}{8}$ and $\beta_1=\dfrac{3}{8}(2d-1)$, and one gets that for $\text{dim}_{\mathcal{H}}(E)>\dfrac{2d+1}{4}=\dfrac{d}{2}+\dfrac{1}{4}$, $|F(\alpha)|\lesssim 1$.

Next we want to show that if $\text{Re}(\alpha)=-\frac{(2d-1)}{4}$ then for $\text{dim}_{\mathcal{H}}(E)>\frac{d}{2}+\frac{1}{4}$, $|F(\alpha)|\lesssim 1$.

To see this, let $\text{Re}(\alpha)<0$. We can write
\begin{equation}
\begin{split}
    F(\alpha)=&\Lambda^\varepsilon_{t}(\mu_{1,-\alpha}^{\delta},\mu_2^{\delta},\mu_{\alpha}^{\delta})\\
    =&\int \sigma_t^{\eps}(x-y,x-z)\mu_{1,-\alpha}^{\delta}(y)\mu_2^{\delta}(z)\mu_{\alpha}(x)\,dydzdx\\
    =&\int\left( \int\int \sigma_t^{\eps}(x-y,x-z)\mu_2^{\delta}(z)\mu_{\alpha}^{\delta}(x) \,dz dx\right) \mu_{1,-\alpha}^{\delta}(y) \,dy\\
    =&\int B_t^{\eps}(\mu_2^{\delta},\mu_{\alpha}^{\delta})(y)\mu_{1,-\alpha}^{\delta}(y) \, dy
\end{split}
\end{equation}
where
\[
B_t^{\eps}(f,g)(y)=\int \sigma_t^{\eps}(x-y,x-z)f(z)g(x)\,dzdx.
\]

With the same argument as before, the fact that $|F(\alpha)|\lesssim 1$ for $\text{Re}(\alpha)=-\frac{2d-1}{4}$ will follow immediately from Theorem \ref{sobolevestimatesforB}. 

Finally, combining the bounds $|F(\alpha)|\lesssim 1$, independently of $\delta$, in both cases above with the Three lines lemma (\cite{Hirschman}), one concludes that $|F(0)|\lesssim 1$. Letting $\delta\to 0$, one has that this implies precisely the desired estimate (\ref{eqn: linftygoal}). The proof is complete.

\end{proof}

\begin{rem}\label{rem: convex body}
    The arguments in Section \ref{unpinned} still hold if we replace $S^{2d-1}$ with some more general surfaces of $\R^{2d}$. Observe that what was crucial about $\hat{\sigma}$ in our proof was the decay bound
    $$|\hat{\sigma}(\xi,\eta)|\lesssim \dfrac{1}{(1+|\xi|+|\eta|)^{\frac{2d-1}{2}}}.$$
    If $\|\cdot\|_{K}$ is a norm generated by a symmetric bounded convex body $K$ in $\R^{2d}$ with a smooth boundary and everywhere non-vanishing Gaussian curvature, consider 
    $$\Box^{\|\cdot\|_{K}}(E)=\{\|(x,x)-(y,z)\|_{K}\colon x,y,z\in E,\, y\neq z\}.$$
    Since the Fourier transform of the surface measure in the boundary of $K$ satisfies the same decay bound (see \cite{steinhabook} or \cite{booksogge}, for example), we still get the same conclusion, that is, $\mathcal{L}^{1}(\Box^{\|\cdot\|_{K}}(E))>0 $ if $\text{dim}_{\mathcal{H}}(E)>d/2+1/4$, and $d\geq 1$.
\end{rem}  

\section{An alternative proof for the case $d=1$}\label{sec: product set}

In this section, we exploit a different approach, along the lines of \cite{GILP2015} and \cite{Liu19}. We first recover the result in the previous section for $d=1$ and then upgrade it to a pinned version. Namely, 

\begin{thm} \label{twosetsthm} 
Let $E\subseteq \R$ be a compact set. If $\text{dim}_{\mathcal{H}}(E)>1/2+1/4=3/4$, then 
    $\mathcal{L}^{1}(\Box(E))>0$. Moreover, there is a point $x\in E$ such that $\mathcal{L}^1(\Box_x(E))>0$.
\end{thm}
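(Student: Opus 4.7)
The plan is to apply Lemma \ref{lem: subsets} iteratively to extract three pairwise separated compact subsets $E_1, E_2, E_3 \subset E$, each supporting a Frostman measure $\mu_i$ of some dimension $s \in (3/4, \dim E)$. Then I work with $\mu_A := \mu_1 \times \mu_2$, which is supported on $A := E_1 \times E_2 \subset \mathbb{R}^2$ and satisfies the ball condition $\mu_A(B(w, r)) \lesssim r^{2s}$, and with $\mu_B$, the pushforward of $\mu_3$ under the diagonal embedding $x \mapsto (x, x)$, supported on $B := \{(x,x) : x \in E_3\} \subset \mathbb{R}^2$. The key observation is that for $x \in E_3$,
\[
\Box_x(E) \supseteq \Box_x(E_1, E_2) = \{|a - (x, x)| : a \in A\} = \Delta_{(x, x)}(A),
\]
a \emph{pinned distance set in} $\mathbb{R}^2$, with the pin coming from the singular set $B$ rather than from $A$ itself. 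Thus the pinned box problem in $\mathbb{R}$ becomes a join pinned distance problem in $\mathbb{R}^2$ for the pair $(\mu_A, \mu_B)$; the same reduction, applied with only two subsets $E_1, E_2$, already produces $\mathcal{L}^1(\Box(E)) > 0$ (the first half of the statement).

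Next I would run the standard $L^2$ reduction: for each $b$, let $\nu_b := (\pi_b)_*\mu_A$ denote the pushforward under $\pi_b(a) = |a - b|$. It is enough to show that $\nu_b \in L^2(\mathbb{R})$ for $\mu_B$-a.e.\ $b$, which follows once we prove
\[
\int \int_I |\sigma_t^\epsilon * \mu_A(b)|^2 \, dt \, d\mu_B(b) \lesssim 1 \quad \text{uniformly in } \epsilon > 0,
\]
where $\sigma_t^\epsilon$ is a smooth thickening of the surface measure on the circle $S^1_t \subset \mathbb{R}^2$ and $I$ is a compact interval bounded away from $0$ and $\infty$ (which captures the admissible values of $t$ because $E_1, E_2, E_3$ are pairwise separated).

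The main work is then to prove this $L^2$ bound. Expanding via Plancherel and averaging the phase factor $\hat{\sigma}(t\xi)\overline{\hat{\sigma}(t\eta)}$ over $t \in I$ using the stationary-phase asymptotics $\hat{\sigma}(\xi) = c|\xi|^{-1/2}\cos(2\pi|\xi| - \pi/4) + O(|\xi|^{-3/2})$ in $\mathbb{R}^2$, one reduces, as in Liu \cite{Liu19} and Guth--Iosevich--Liu--Pramanik \cite{GILP2015}, to a bilinear form roughly of the shape
\[
\int\!\!\int |\widehat{\mu_A}(\xi)|\,|\widehat{\mu_A}(\eta)| \,\frac{|\widehat{\mu_B}(\xi - \eta)|}{|\xi|^{1/2}\,|\eta|^{1/2}} \,\min\!\bigl(1, \tfrac{1}{|\,|\xi|-|\eta|\,|}\bigr) \, d\xi\, d\eta.
\]
After dyadically localising to $|\xi| \sim 2^j$, $|\eta| \sim 2^k$, and $|\xi - \eta| \sim 2^\ell$, I would apply Cauchy--Schwarz together with the energy estimates $\int |\widehat{\mu_A}(\xi)|^2 |\xi|^{\alpha - 2}\, d\xi < \infty$ for $\alpha < 2s$ and $\int |\widehat{\mu_B}(\xi)|^2 |\xi|^{\beta - 2}\, d\xi < \infty$ for $\beta < s$ (the latter being valid because $\mu_B$ has dimension $s$ as a measure in $\mathbb{R}^2$). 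The threshold $s > 3/4$ is exactly where the bookkeeping balances: the product structure gives $\dim \mu_A = 2s > 3/2$, which is the classical Mattila/Falconer threshold in $\mathbb{R}^2$ that tames the $|\xi|^{-1/2}|\eta|^{-1/2}$ factor on the $\mu_A$-side, while $s > 3/4$ furnishes just enough Frostman decay on $\mu_B$ to absorb the $|\widehat{\mu_B}(\xi - \eta)|$ factor coupling $\xi$ and $\eta$.

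The principal technical obstacle I expect lies precisely in that coupling: $\widehat{\mu_B}(\xi - \eta)$ is a lower-dimensional singular weight (the diagonal has codimension $1$ in $\mathbb{R}^2$), so a naive Cauchy--Schwarz in the variable $\xi - \eta$ is wasteful. The natural remedy is to split into the near-diagonal region $|\xi - \eta| \lesssim 1$ and the far region, and on each piece exploit the tensor structure $\widehat{\mu_A}(\xi_1, \xi_2) = \widehat{\mu_1}(\xi_1)\widehat{\mu_2}(\xi_2)$ to separate the $\xi_1, \xi_2$ integrations and apply the one-dimensional energy bounds for $\mu_1, \mu_2$ individually. This tensor decomposition is exactly what makes the argument clean in $d = 1$ and does not survive in the higher-dimensional analogue, consistent with the authors' remark that this approach is tailored to $d = 1$.
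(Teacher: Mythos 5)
Your reduction is exactly the paper's: extract separated pieces via Lemma \ref{lem: subsets}, set $A=E_1\times E_2$ and let the pin run over the diagonal copy of a third piece, so that $\Box_x(E)\supseteq \Delta_{(x,x)}(A)$ and the problem becomes a joint (pinned) distance problem for the pair $(\mu_A,\mu_B)$ in $\R^2$, with the $L^2$ density of the pushforward as the target. Up to that point the proposal is correct. The gap is in the main estimate: you assert that the bilinear form
\[
\int\!\!\int |\hat{\mu}_A(\xi)|\,|\hat{\mu}_A(\eta)|\,\frac{|\hat{\mu}_B(\xi-\eta)|}{|\xi|^{1/2}|\eta|^{1/2}}\min\Bigl(1,\tfrac{1}{||\xi|-|\eta||}\Bigr)\,d\xi\,d\eta
\]
closes at $s>3/4$ by dyadic Cauchy--Schwarz and energy estimates, but it does not. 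Localizing to $|\xi|\sim|\eta|\sim 2^j$, $||\xi|-|\eta||\le 1$ and applying Cauchy--Schwarz symmetrically reduces matters to $2^{-j}\int_{|\xi|\sim 2^j}|\hat{\mu}_A(\xi)|^2 K(\xi)\,d\xi$ with $K(\xi)=\int_{A_\xi}|\hat{\mu}_B|$, where $A_\xi$ is the width-one annulus of radius $|\xi|$ centered at $\xi$ (area $\sim 2^j$). The best available control on $K$ is Cauchy--Schwarz against the annulus energy $\int_{A_\xi}|\hat{\mu}_B|^2\lesssim 2^{j/2}$ (valid for $s>1/2$), giving $K(\xi)\lesssim 2^{3j/4}$ and hence a dyadic contribution $\lesssim 2^{j(7/4-2s)}$, summable only for $s>7/8$; the trivial bound $|\hat{\mu}_B|\le 1$ is even worse ($s>1$). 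The difficulty is structural: $\hat{\mu}_B$ is constant along the antidiagonal directions, so a first-power absolute value of $\hat{\mu}_B$ at the difference frequency $\xi-\eta$ is intrinsically lossy, and neither the near/far split nor the tensor factorization $\hat{\mu}_B(\zeta)=\hat{\mu}_3(\zeta_1+\zeta_2)$ that you invoke is shown to recover the missing $1/8$.

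What actually yields $3/4$ in the paper is a reorganization that never produces the difference-frequency coupling. For the unpinned statement, the group-action counting bound gives $\int \nu_{A,B}^2\lesssim \int_{O(2)}\int \hat{\mu}_A(\xi)\overline{\hat{\mu}_A(\theta^{-1}\xi)}\,\overline{\hat{\mu}_B(\xi)}\hat{\mu}_B(\theta^{-1}\xi)\,d\xi\,d\theta$, where $\hat{\mu}_B$ appears at the frequencies $\xi$ and $\theta^{-1}\xi$ themselves (same modulus), so Cauchy--Schwarz produces $|\hat{\mu}_A(\xi)|^2|\hat{\mu}_B(\theta\xi)|^2$ with \emph{both} factors squared; the origin-centered spherical average $\int_{O(2)}|\hat{\mu}_B(\theta\xi)|^2\,d\theta\lesssim |\xi|^{-1/2}$ (Lemma \ref{lemmaforB}, needing only $\dim B>1/2$) then combines with $I_{3/2}(\mu_A)<\infty$, i.e.\ $\dim A>3/2$, i.e.\ $s>3/4$. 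For the pinned statement the paper invokes Liu's theorem (Theorem \ref{Liu}) as a black box with $\beta(s_B)=1/2$ and $\dim A+\beta>2$. To repair your argument you should either keep the oscillation and pass through this group-action identity rather than taking absolute values in the difference-frequency form, or cite Liu's pinned result directly; as written, the claimed balancing at $s>3/4$ is unsubstantiated and the sketched estimates fall short of it.
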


To prove Theorem \ref{twosetsthm}, we first realize $\Box(E)$ as a set of distances between two different subsets of $\R^2$. Given $A,B\subset \R^2$ compact subsets, define 
\begin{equation}
    \Delta(A,B)=\{|a-b|\colon a\in A,b\in B\}.
\end{equation}

For $E\subset \R$, choose two disjoint subsets $E_1, E_2$ of it as in Lemma \ref{lem: subsets}, and let 
$$\mathcal{D}_{E\times E}:=\{(y,y)\colon y\in E\}$$ denote the diagonal of $E\times E$.

Then 
\begin{equation}
\begin{split}
    \Box(E)\supset \{|(x,x)-(y,z)|\colon x\in E,\, y\in E_1, \, z\in E_2\}=\Delta(A,B)
\end{split}
\end{equation}
for $A=E_1\times E_2\subseteq \R^2$ and $B=\mathcal{D}_{E\times E}\subseteq \R^2$. The problem is thus reduced to showing that $\mathcal{L}^1(\Delta(A,B))>0$.

In the following, we first prove the unpinned version of the result by describing a group action method inspired from \cite{GILP2015}. Even though this result will be superseded by the pinned result later, we find this argument interesting in its own right and may inspire some future applications. 

\subsection{An argument for the unpinned result}

Consider the induced distance measure $\nu_{A,B}$ in $\Delta(A,B)$, namely
$$\int \varphi(t)\,d\nu_{A,B}(t)=\int \varphi(|a-b|)\,d\mu_A(a)d\mu_B(b),\quad \forall\varphi\in \mathcal{S}(\R),$$
where $\mu_A,\mu_B$ are Frostman measures on $A$ and $B$, respectively.

One has 
\begin{equation}
    \begin{split}
        \nu_{A,B}^{\eps}(t):=&\frac{1}{\eps}\int \chi_{[t-\eps,t+\eps]}(t)\,d\nu_{A,B}(t)\\
        =&\frac{1}{\eps} \int_{A\times B}\chi_{[t-\eps,t+\eps]}(|a-b|)\,d\mu_A(a)d\mu_B(b),
    \end{split}
\end{equation}
so
\begin{equation}
    \begin{split}
        \int \nu^{\eps}_{A,B}(t)^2 dt=&\frac{1}{\eps^2} \int \chi_{[t-\eps,t+\eps]}(|a-b|) \chi_{[t-\eps,t+\eps]}(|a'-b'|) \,d\mu_A(a)d\mu_B(b) d\mu_A(a')d\mu_B(b')dt \\
        \lesssim &\frac{1}{\eps^2}\int \chi_{||a-b|-|a'-b'||\leq 2\eps} \left(\int \chi_{||a-b|-t|\leq \eps}(t)\,dt\right) \,d\mu_A(a)d\mu_B(b) d\mu_A(a')d\mu_B(b')\\
        \lesssim & \frac{1}{\eps}\int \chi_{||a-b|-|a'-b'||\leq 2\eps} \,d\mu_A(a)d\mu_B(b) d\mu_A(a')d\mu_B(b').
    \end{split}
\end{equation}

Cover the circle $S^1$ with arcs $C_j\subset O(2)$ of length comparable to $\eps$ (so wee need $\sim 1/\eps$ many of them). For any choice of $\theta_j\in C_j$, $1\leq j\leq 1/\eps$, one has
$$\{(a,b,a',b')\colon ||a-b|-|a'-b'||\leq 2\eps\}\subseteq \bigcup_{1\leq j\leq 1/\eps}\{(a,b,a',b')\colon |(a'-b')-\theta_j(a-b)|< 3\eps\}.$$
Hence,
\begin{equation}
    \begin{split}
        \int \nu_{A,B}^{\eps}(t)^2\,dt\lesssim& \frac{1}{\eps}\sum_{j} \frac{1}{\eps}\int_{C_j} (\mu_A\times \mu_B\times \mu_A\times \mu_B)
\{(a,b,a',b')\colon |(a'-b')-\theta(a-b)|<4 \eps\}d\theta\\
\lesssim & \frac{1}{\eps^2}\int_{O(2)}  (\mu_A\times \mu_B\times \mu_A\times \mu_B)
\{(a,b,a',b')\colon |(a'-b')-\theta(a-b)|<4 \eps\}d\theta.
\end{split}
\end{equation}

Also define measures $\lambda_{\theta}^{A},\,\lambda_{\theta}^{B} $ on $A-\theta A$ and $B-\theta B$, respectively, by
\begin{equation}
    \begin{split}
        \int \varphi(z)\,d\lambda_{\theta}^{A}(z)=& \int\varphi(a-\theta a')\,d\mu_{A}(a)d\mu_{A}(a'),\\
        \int \varphi(z)\,d\lambda_{\theta}^{B}(z)=&\int \varphi(b-\theta b')\,d\mu_B(b)d\mu_B(b').
    \end{split}
\end{equation}

Let $\psi^{\eps}(z)=\eps^{-2}\psi(z/\eps),\,z\in \R^2$ be an approximation of the identity in $\R^2$, where $\psi\in C^{\infty}_{0}(\R^2)$, $\psi\equiv 1$ in $|z|\leq 1$ and define
$$\lambda^{A,\eps}_{\theta}:=\lambda_{\theta}^{A}*\psi^{\eps}.$$

We claim that 
\begin{equation}\label{ABineqwitheps}
    \int_{O(2)}\int_{\R^2} \lambda_{\theta}^{A,\eps}(z)\lambda_{\theta}^{B,\eps}(z)\,dzd\theta \gtrsim \int \nu_{A,B}^{\eps/8}(t)^2\, dt.
\end{equation}

To prove our claim notice that 
\begin{equation}
    \begin{split}
        \lambda_{\theta}^{A,\eps}(z)=&\langle \lambda_{\theta}^{A}, \psi^{\eps}(\cdot-z)\rangle\\
        =&\int \psi^{\eps}(a'-\theta a-z)d\mu_A(a)d\mu_{A}(a')\\
        \gtrsim& \int\frac{1}{\eps^2}\chi_{B(0,\eps)}(a'-\theta a-z)d\mu_A(a)d\mu_{A}(a').
    \end{split}
\end{equation}Therefore,
\begin{equation*}
    \begin{split}
&\int_{O(2)}\int_{\R^2}\lambda_{\theta}^{A,\eps}(z)\lambda_{\theta}^{B,\eps}(z)\,dz d\theta\\
        \gtrsim &\int(\int_{\R^2} \eps^{-2} \chi_{B(0,\eps)}(a'-\theta a-z)\eps^{-2} \chi_{B(0,\eps)}(b'-\theta b-z)\,dz) \,d(\mu_A^2\times \mu_B^2)(a,a',b,b')d\theta\\
        \gtrsim & \int \eps^{-4} \chi_{B(0,\frac{\eps}{2})}((a'-\theta a)-(b'-\theta b))\left(\int_{\R^2} \chi_{B(0,\frac{\eps}{2})}(a'-\theta a -z )\,dz\right)\,d(\mu_A^2\times \mu_B^2)(a,a',b,b')d\theta\\
        \gtrsim &\int_{O(2)}\int_{A^2\times B^2} \eps^{-2} \chi_{|(a'-b')-\theta (a-b)|\leq \eps/2}\,d(\mu_A^2\times \mu_B^2)(a,a',b,b')d\theta\\
        \gtrsim & \int \nu_{A,B}^{\eps'}(t)^2\,dt.
    \end{split}
\end{equation*}
for say $\eps'=\eps/8$.

Letting $\eps \rightarrow 0$ in inequality (\ref{ABineqwitheps}), we get the following conclusion. 

\begin{prop} Let $\lambda_{\theta}^{A}$, $\lambda_{\theta}^{B}$ and $\nu_{A,B}$ be as above. Then,
    \begin{equation}
        \int_{\R} \nu_{A,B}(t)^2 \,dt\lesssim \int_{O(2)}\int_{\R^2} \lambda_{\theta}^{A}(z)\lambda_{\theta}^{B}(z)\,dz d\theta.
    \end{equation} 
\end{prop}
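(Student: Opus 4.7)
The plan is to pass to the limit $\eps \to 0^+$ in the smoothed inequality (\ref{ABineqwitheps}) derived just above. For the left-hand side, by the Lebesgue differentiation theorem applied to the Radon measure $\nu_{A,B}$, the average $\nu_{A,B}^{\eps/8}(t)$ converges for Lebesgue-a.e.~$t$ as $\eps\to 0$ to the Radon--Nikodym derivative of $\nu_{A,B}$ with respect to Lebesgue measure. Interpreting $\int \nu_{A,B}(t)^2\,dt$ in the statement as the $L^2$-norm squared of this derivative (with the convention $+\infty$ when $\nu_{A,B}$ fails to be absolutely continuous with an $L^2$ density), Fatou's lemma yields
\[
\int_{\R} \nu_{A,B}(t)^2\,dt \leq \liminf_{\eps \to 0} \int_{\R} \nu_{A,B}^{\eps/8}(t)^2\,dt.
\]

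For the right-hand side, we will use Plancherel's identity: since $\lambda_\theta^{A,\eps} = \lambda_\theta^A \ast \psi^\eps$ with $\widehat{\psi^\eps}(\xi)=\hat\psi(\eps\xi)$, for each $\theta\in O(2)$ we have
\[
\int_{\R^2}\lambda_\theta^{A,\eps}(z)\lambda_\theta^{B,\eps}(z)\,dz = \int_{\R^2}\hat\lambda_\theta^A(\xi)\,\overline{\hat\lambda_\theta^B(\xi)}\,|\hat\psi(\eps\xi)|^2\,d\xi.
\]
Since $\hat\psi$ is bounded by $\|\psi\|_1$ and $|\hat\psi(\eps\xi)|^2 \to (\int\psi)^2$ pointwise as $\eps\to 0$, a dominated convergence argument (in both $\xi$ and $\theta$, after reducing to the case where the target right-hand side is finite, so that $\hat\lambda_\theta^A\overline{\hat\lambda_\theta^B} \in L^1(\R^2)$ for a.e.~$\theta$) yields
\[
\lim_{\eps\to 0}\int_{O(2)}\int_{\R^2}\lambda_\theta^{A,\eps}\lambda_\theta^{B,\eps}\,dz\,d\theta = \Bigl(\int\psi\Bigr)^2\int_{O(2)}\int_{\R^2}\lambda_\theta^A\lambda_\theta^B\,dz\,d\theta.
\]

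Combining both steps with inequality (\ref{ABineqwitheps}) and absorbing the constant $(\int\psi)^2$ into the $\lesssim$ notation gives the proposition. The main obstacle is justifying the interchange of limit and integral on the right-hand side in full generality; the cleanest way around this is to first reduce to the case where the target right-hand side is finite (the inequality being vacuous otherwise), in which case Plancherel combined with Cauchy--Schwarz produces the $L^1$-in-$\xi$ and $L^1$-in-$\theta$ dominators required for the two applications of dominated convergence.
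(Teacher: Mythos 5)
Your argument is essentially the paper's own, which simply says ``letting $\eps\to 0$ in inequality (\ref{ABineqwitheps})''; you supply the standard limiting details (Plancherel plus dominated convergence on the right-hand side, pointwise convergence plus Fatou on the left). One refinement: Lebesgue differentiation combined with Fatou only controls the density of the absolutely continuous part of $\nu_{A,B}$, so under your own convention (left side $=+\infty$ unless $\nu_{A,B}$ has an $L^2$ density) the argument is incomplete if $\nu_{A,B}$ has a singular part; the standard fix --- which is also what the paper's subsequent application needs --- is to assume the right side finite, extract an $L^2$-weakly convergent subsequence of $\nu_{A,B}^{\eps/8}$, and identify the weak limit with the density of $\nu_{A,B}$ by testing against $C_c(\R)$ functions.
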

This means that if the right hand side of this inequality is finite then the measure $\nu_{A,B}$ has an $L^2$ density which we still denote by $\nu_{A,B}$, which implies by standard Cauchy-Schwarz argument that $\mathcal{L}^1(\Delta(A,B))>0$.

\begin{rem}
    The proposition above generalizes for $A,\, B\subset \R^d$ and $d\geq 2$, by replacing $O(2)$ by $O(d)$. The key ideas can be found in \cite{GILP2015}. We omit the details.
\end{rem}

Let us now apply the proposition above to our case of interest.

\begin{prop}\label{propAB}
    Let $A, B\in \mathbb{R}^2$ be compact sets with Hausdorff dimension ${\rm dim}(A)>\frac{3}{2}$ and ${\rm dim}(B)>\frac{1}{2}$. Then, 
    $$\int_{O(2)}\int_{\R^2} \lambda_{\theta}^{A}(z)\lambda_{\theta}^{B}(z)\,dz d\theta\lesssim 1.$$
    As a consequence, $\nu_{A,B}\in L^{2}(\R)$ so $\mathcal{L}^{1}(\Delta(A,B))>0$.
\end{prop}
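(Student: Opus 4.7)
The plan is to pass to the Fourier side via Plancherel, use rotational symmetry to reduce the $O(2)$-average to a spherical average, and close the estimate with the Mattila spherical-integral bound for $\mu_B$ combined with an energy computation for $\mu_A$.

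A direct computation gives $\widehat{\lambda_\theta^A}(\xi)=\hat{\mu}_A(\xi)\overline{\hat{\mu}_A(\theta^{-1}\xi)}$ and the analogue for $B$. Writing $\int\lambda_\theta^A(z)\lambda_\theta^B(z)\,dz$ as the Parseval pairing $\int\widehat{\lambda_\theta^A}(\xi)\overline{\widehat{\lambda_\theta^B}(\xi)}\,d\xi$ (justified by first mollifying $\mu_A,\mu_B$ and passing to the limit, as in the derivation of \eqref{ABineqwitheps}, since the $\lambda_\theta$'s need not have $L^2$ densities), Fubini yields
\[
\int_{O(2)}\!\int_{\R^2}\lambda_\theta^A\lambda_\theta^B\,dz\,d\theta=\int_{\R^2}\hat{\mu}_A(\xi)\overline{\hat{\mu}_B(\xi)}\,\Big(\int_{O(2)}\overline{\hat{\mu}_A(\theta^{-1}\xi)}\hat{\mu}_B(\theta^{-1}\xi)\,d\theta\Big)\,d\xi.
\]
By rotational symmetry the inner integral depends only on $|\xi|$ and equals $\overline{M(|\xi|)}$, where $M(r):=\int_{S^1}\hat{\mu}_A(r\omega)\overline{\hat{\mu}_B(r\omega)}\,d\sigma(\omega)$; passing to polar coordinates in the outer integral then collapses the whole quantity to a constant multiple of $\int_0^\infty r\,|M(r)|^2\,dr$.

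To estimate this, apply Cauchy-Schwarz on $L^2(S^1,d\sigma)$ to get $|M(r)|^2\leq F(r)G(r)$, where $F(r):=\int_{S^1}|\hat{\mu}_A(r\omega)|^2\,d\sigma$ and $G(r)$ is the analogue for $\mu_B$. The classical Mattila estimate, obtained by expanding $G$ and inserting the decay $|\hat{\sigma}_{S^1}(\xi)|\lesssim (1+|\xi|)^{-1/2}$, yields $\sup_{r\geq 0}r^{1/2}G(r)\lesssim I_{1/2}(\mu_B)\lesssim 1$ since $\dim(B)>1/2$. Writing $rF(r)G(r)=[r^{1/2}F(r)]\cdot[r^{1/2}G(r)]$ and pulling the second factor out in $L^\infty$,
\[
\int_0^\infty rF(r)G(r)\,dr\,\leq\,\Big(\sup_{r\geq 0}r^{1/2}G(r)\Big)\int_0^\infty r^{1/2}F(r)\,dr\,\sim\,\int_{\R^2}|\hat{\mu}_A(\xi)|^2|\xi|^{-1/2}\,d\xi\,\sim\,I_{3/2}(\mu_A),
\]
which is finite precisely because $\dim(A)>3/2$. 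The consequence $\mathcal{L}^1(\Delta(A,B))>0$ then follows from $\nu_{A,B}\in L^2$ via the standard Cauchy-Schwarz inequality $1=\int d\nu_{A,B}\leq \mathcal{L}^1(\Delta(A,B))^{1/2}\|\nu_{A,B}\|_2$.

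The only place the precise dimensional thresholds enter is the asymmetric splitting $r=r^{1/2}\cdot r^{1/2}$ in the last step: the factor $r^{1/2}G(r)$ must be bounded, which costs exactly $\dim(B)>1/2$ (matching the $(d-1)/2=1/2$ decay of $\hat{\sigma}_{S^1}$), while $\int r^{1/2}F(r)\,dr$ must be finite, which costs exactly $\dim(A)>3/2$ (because this integral is a constant multiple of the $3/2$-energy of $\mu_A$). There is essentially no slack in either hypothesis under this Fourier approach, and the principal nuisance is just keeping the $O(2)$-to-polar-coordinate bookkeeping consistent.
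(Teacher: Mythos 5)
Your proof is correct and follows essentially the same route as the paper: Plancherel plus the product formula $\widehat{\lambda^A_\theta}(\xi)=\hat{\mu}_A(\xi)\overline{\hat{\mu}_A(\theta^{-1}\xi)}$, Cauchy--Schwarz to decouple $A$ from $B$, the Mattila spherical decay $\int_{S^1}|\hat{\mu}_B(r\omega)|^2\,d\sigma(\omega)\lesssim r^{-1/2}$ (the paper's Lemma \ref{lemmaforB}), and finiteness of $I_{3/2}(\mu_A)$. The only cosmetic difference is where Cauchy--Schwarz is applied (you apply it pointwise in $r$ on $S^1$ to the mixed average $M(r)$, the paper applies it globally on $O(2)\times\R^2$ and symmetrizes by a change of variables); both reduce to the identical quantity $\int_0^\infty rF(r)G(r)\,dr$.
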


It is easy to see that the first part of Theorem \ref{twosetsthm} follows immediately from this proposition. Indeed, recall from the beginning of the section that we have chosen $A=E_1\times E_2$ and $B=\mathcal{D}_{E\times E}$. If ${\rm dim}(E)>\frac{3}{4}$, then ${\rm dim}(A)\geq {\rm dim}(E_1)+{\rm dim}(E_2)>\frac{3}{4}+\frac{3}{4}=\frac{3}{2}$ and ${\rm dim}(B)={\rm dim}(E)>\frac{1}{2}$.

\begin{proof}[Proof of Proposition \ref{propAB}]
From the definition of $\lambda_{\theta}^{A}$ one has
$$\hat{\lambda}_{\theta}^{A}(\xi)=\hat{\mu}_A(\xi)\overline{\hat{\mu}_{A}(\theta^{-1}\xi)}.$$
So from H\"older's inequality followed by change of variables we get
\begin{equation}
    \begin{split}
        \int_{O(2)} \int_{\R^2}  \lambda_{\theta}^{A}(z)&\lambda_{\theta}^{B}(z)\,dz d\theta=\int_{O(2)}\int_{\R^2} \hat{\lambda}_{\theta}^{A}(\xi)\overline{\hat{\lambda}_{\theta}^{B}(\xi)}\,d\xi d\theta\\
        =&\int_{O(2)}\int_{\R^2}\hat{\mu}_{A}(\xi)\overline{\hat{\mu}_{A}(\theta^{-1}\xi)}\overline{\hat{\mu}_{B}(\xi)}\hat{\mu}_{B}(\theta^{-1}\xi)\,d\xi d\theta\\
        \lesssim& \left(\int_{O(2)} \int_{\R^2}|\hat{\mu}_{A}(\xi)|^2|\hat{\mu}_{B}(\theta^{-1}\xi)|^2 \,d\xi d\theta \right)^{1/2}\\
        &\cdot\left(\int_{O(2)} \int_{\R^2}|\hat{\mu}_{B}(\xi)|^2|\hat{\mu}_{A}(\theta^{-1}\xi)|^2 \,d\xi d\theta \right)^{1/2}\\
        =&\int_{O(2)} \int_{\R^2}|\hat{\mu}_{A}(\xi)|^2|\hat{\mu}_{B}(\theta\xi)|^2 \,d\xi d\theta.
    \end{split}
\end{equation}

\begin{lem}\label{lemmaforB}
    For $B\subset \R^2$ with $\text{dim}_{\mathcal{H}}(B)>1/2$, one has for all $\xi\in \R^2\backslash\{ 0\}$
    $$\int_{O(2)}|\hat{\mu}_{B}(\theta \xi)|^2\,d\theta\lesssim \dfrac{1}{|\xi|^{1/2}}.$$

\end{lem}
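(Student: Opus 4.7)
The plan is first to rewrite the integral over $O(2)$ as a spherical average of $|\hat{\mu}_B|^2$, and then invoke the standard decay of the circle measure together with a near/far splitting using the Frostman hypothesis. For fixed $\xi\neq 0$, as $\theta$ ranges over $O(2)$, the point $\theta\xi$ traces out the circle of radius $|\xi|$ centered at the origin. Parametrizing accordingly gives
$$\int_{O(2)} |\hat{\mu}_B(\theta\xi)|^2 \, d\theta \simeq \int_{S^1} |\hat{\mu}_B(|\xi|\omega)|^2 \, d\sigma(\omega),$$
where $\sigma$ is the normalized arc-length measure on $S^1$. Writing $r=|\xi|$, unfolding the definition of $\hat{\mu}_B$ and applying Fubini yields
$$\int_{S^1} |\hat{\mu}_B(r\omega)|^2 \, d\sigma(\omega) = \int\int \hat{\sigma}(r(x-y)) \, d\mu_B(x) d\mu_B(y).$$

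Next, I will apply the classical stationary phase decay for the arc-length measure on the circle, namely $|\hat{\sigma}(\eta)| \lesssim (1+|\eta|)^{-1/2}$ (this is the exponent $(d-1)/2$ with $d=2$). If $r\leq 1$, the desired bound $\lesssim r^{-1/2}$ is immediate because $|\hat{\mu}_B|\leq \mu_B(\R^2)\lesssim 1$. For $r\geq 1$, I split the double integral into the near-diagonal region $|x-y|\leq 1/r$ and the complementary far region. On the near-diagonal piece the integrand is uniformly bounded, so the Frostman condition $\mu_B(B(x, 1/r))\lesssim r^{-s}$ controls it by $Cr^{-s}$. On the far region the decay gives $|\hat{\sigma}(r(x-y))|\lesssim (r|x-y|)^{-1/2}$, so that piece is bounded by
$$r^{-1/2}\int\int |x-y|^{-1/2} \, d\mu_B(x) d\mu_B(y) = r^{-1/2} \, I_{1/2}(\mu_B),$$
and the $\tfrac12$-energy $I_{1/2}(\mu_B)$ is finite because $\dim(B) > 1/2$ guarantees a Frostman measure of exponent $s>1/2$.

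Combining the two pieces for $r\geq 1$, since $s>1/2$ we have $r^{-s}\leq r^{-1/2}$, so the total bound is $\lesssim r^{-1/2}$. Together with the trivial range $r\leq 1$, this yields the conclusion $\int_{O(2)}|\hat{\mu}_B(\theta\xi)|^2\, d\theta \lesssim |\xi|^{-1/2}$ for all $\xi\neq 0$. There is no serious obstacle here: the only inputs are the $(d-1)/2$-decay of $\hat{\sigma}$ on $S^1$ and the finiteness of the $\tfrac12$-energy, both standard. The threshold $\dim(B)>1/2$ is precisely what is needed to make the far-diagonal energy integral converge, and it is also exactly what is needed to match the decay rate produced by stationary phase on $S^1$.
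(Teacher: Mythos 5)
Your proof is correct and follows essentially the same route as the paper: rewrite the $O(2)$ average as a spherical average, apply Fubini to reduce to $\iint \hat{\sigma}(r(x-y))\,d\mu_B(x)d\mu_B(y)$, and use the $|\eta|^{-1/2}$ decay of $\hat{\sigma}$ together with finiteness of the $\tfrac12$-energy. The only cosmetic difference is that you split into near/far diagonal regions and invoke the Frostman ball condition on the near piece, whereas the paper absorbs that region by using the homogeneous bound $|\hat{\sigma}(\eta)|\lesssim |\eta|^{-1/2}$ for all $\eta\neq 0$.
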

\begin{proof}[Proof of Lemma \ref{lemmaforB}]
It is equivalent to show that for $r>0$ 
$$\int_{S^1} |\hat{\mu}_B(r\omega )|^2 \,d\sigma(\omega)\lesssim r^{-1/2}.$$
The left hand side of this inequality is equal to 
\begin{equation}
    \begin{split}
       & \int_{S^1} \int \int e^{2\pi i(x-y)\cdot r\omega} \,d\mu_B(x)d\mu_B(y)d\sigma(\omega)\\
        =&\int \int \hat{\sigma}(r(x-y))\,d\mu_B(x)d\mu_B(y)\\
    \lesssim&\,r^{-1/2}\int |x-y|^{-1/2}\,d\mu_B(x)d\mu_B(y)\lesssim r^{-1/2}
    \end{split}
\end{equation}
    if $\text{dim}_{\mathcal{H}}(B)>1/2$.
\end{proof}

Plugging the estimate in the lemma above for $B$ into the previous inequality we get 
\begin{equation}
\begin{split}
    \int\int \lambda_{\theta}^{A}(z) \lambda_{\theta}^{B}(z)\,dz d\theta\lesssim & \int_{\R^2} \int_{O(2)} |\hat{\mu}_{A}(\xi)|^2|\hat{\mu}_{B}(\theta\xi)|^{2}\,d\theta d\xi\\
    \lesssim &\int_{\R^2} |\hat{\mu}_{A}(\xi)|^2 \dfrac{1}{|\xi|^{1/2}}\,d\xi\\
    \lesssim & \int \dfrac{1}{|a-a'|^{3/2}}\,d\mu_{A}(a)d\mu_{A}(a')<\infty
\end{split}
\end{equation}
if $\text{dim}_{\mathcal{H}}(A)>3/2$. The proof is complete.
\end{proof}

\subsection{The pinned result}

We now turn to prove the second part of Theorem \ref{twosetsthm}, in other words the pinned version of the result. We first recall a result by Liu \cite{Liu19} where the pinned Falconer distance problem is studied via a similar group action method. The following is a rephrased version of Theorem 1.7 in \cite{Liu19}.

\begin{thm}\label{Liu}\cite{Liu19}
Let $d\geq 2$ and $A, B\subset \mathbb{R}^d$ be compact sets. Let $\mu_B$ be the Frostman measure on $B$ satisfying
\begin{equation}\label{eqn: Liu1}
\mu_B(B(x,r))\lesssim r^{s_B},\quad \forall x\in \mathbb{R}^d,\, \forall r>0,
\end{equation}and
\begin{equation}\label{eqn: Liu2}
\int_{S^{d-1}} |\hat\mu_B(R\omega)|^2\,d\sigma(\omega)\lesssim_\epsilon R^{-\beta(s_B)+\epsilon},\quad \forall \epsilon>0.
\end{equation}Suppose further that ${\rm dim}(A)+\beta(s_B)>d$. Then, for $\mu_B$-almost every $x\in B$, there holds $\mathcal{L}^1(\Delta_x(A)):=\mathcal{L}^1(\{|x-y|:\, y\in A\})>0$. In particular, $\mathcal{L}^1(\Delta(A, B))>0$.
\end{thm}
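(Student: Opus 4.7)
The plan follows the Mattila-Liu $L^2$ strategy. Let $\mu_A$ denote a Frostman measure on $A$ realizing a dimension exponent close to ${\rm dim}(A)$, and define the pinned distance measure $\nu_x^A$ on $\R$ by $\int f\, d\nu_x^A = \int f(|x-y|)\, d\mu_A(y)$. This is a probability measure supported on $\Delta_x(A)$, so if its density $f_x$ lies in $L^2(\R)$, Cauchy-Schwarz gives $1 = \|f_x\|_{L^1} \leq \|f_x\|_{L^2}\,\mathcal{L}^1(\Delta_x(A))^{1/2}$. Thus it suffices to establish the energy bound
\begin{equation*}
\int_B \|\nu_x^A\|_{L^2(\R)}^2 \, d\mu_B(x) < \infty,
\end{equation*}
which will yield $\nu_x^A \in L^2(\R)$ for $\mu_B$-almost every $x \in B$ and finish the proof.

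I would smooth to $\nu_x^{A,\epsilon}$ by convolving with a bump of scale $\epsilon$, bound the left-hand side uniformly in $\epsilon > 0$, and conclude by Fatou. Polar coordinates at $x$ give $\nu_x^{A,\epsilon}(t) \approx t^{d-1}(\mu_A * \sigma_t^\epsilon)(x)$, where $\sigma_t^\epsilon$ is an $\epsilon$-thickened normalized surface measure on $\partial B(0,t)\subset \R^d$. Expanding via Parseval in $x$ against $\mu_B$ then transforms the integral into a Fourier-side expression of the form
\begin{equation*}
\int_0^\infty t^{2(d-1)} \iint \hat\sigma(t\xi)\,\overline{\hat\sigma(t\eta)}\,\hat\mu_A(\xi)\,\overline{\hat\mu_A(\eta)}\,\hat\mu_B(\eta - \xi)\, d\xi\, d\eta\, dt,
\end{equation*}
with lower-order remainders controllable via the Frostman ball bound (\ref{eqn: Liu1}) on $\mu_B$.

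The central step is to bound this integral. I would symmetrize, either by inserting an $O(d)$-average as in \cite{GILP2015}, or via Schur's test using the stationary-phase decay $|\hat\sigma(t\xi)| \lesssim (t|\xi|)^{-(d-1)/2}$, in order to decouple $\xi$ and $\eta$ and reduce to a generalized Mattila-type quantity
\begin{equation*}
\int_1^\infty t^{d-1} \Big(\int_{S^{d-1}} |\hat\mu_A(t\omega)|^2 d\sigma(\omega)\Big)\Big(\int_{S^{d-1}} |\hat\mu_B(t\omega)|^2 d\sigma(\omega)\Big) dt.
\end{equation*}
The spherical decay assumption (\ref{eqn: Liu2}) bounds the $\mu_B$ factor by $t^{-\beta(s_B)+\epsilon}$; for any $s_A \in (d - \beta(s_B),\, {\rm dim}(A))$, finiteness of the energy $I_{s_A}(\mu_A)$ controls the $\mu_A$ factor after integration in $t$; and the dimensional hypothesis ${\rm dim}(A)+\beta(s_B) > d$ is exactly what makes the resulting product integrable.

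The principal obstacle is the decoupling step: the cross term $\hat\mu_B(\eta-\xi)$ intrinsically couples the two integration variables, and removing this coupling cleanly is the technical crux. An $O(d)$-averaging argument combined with the Frostman bound (\ref{eqn: Liu1}) for controlling diagonal contributions, together with stationary-phase estimates for $\hat\sigma$, seems to be the most robust route, following the philosophy of Section \ref{sec: product set}. Secondary matters, such as passing to the limit $\epsilon \to 0$ and uniformly controlling lower-order remainders, should then be routine via Fatou's lemma and dominated convergence.
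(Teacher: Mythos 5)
This theorem is not proved in the paper at all --- it is quoted as a rephrasing of Theorem 1.7 of \cite{Liu19} --- so the relevant comparison is with Liu's argument and with the paper's own unpinned version of the same machinery in Section \ref{sec: product set}. Your outline is essentially that argument: the reduction to $\int_B\|\nu_x^A\|_{L^2}^2\,d\mu_B(x)<\infty$, the identification of the generalized Mattila quantity $\int_1^\infty t^{d-1}\bigl(\int_{S^{d-1}}|\hat\mu_A(t\omega)|^2d\sigma\bigr)\bigl(\int_{S^{d-1}}|\hat\mu_B(t\omega)|^2d\sigma\bigr)dt$ as the object to bound, and the final accounting (use \eqref{eqn: Liu2} on the $\mu_B$ factor, then recognize $\int_1^\infty t^{d-1-\beta}\int_{S^{d-1}}|\hat\mu_A(t\omega)|^2d\sigma\,dt\approx\int_{|\xi|\ge1}|\hat\mu_A(\xi)|^2|\xi|^{-\beta}d\xi$ as an energy integral, finite precisely when ${\rm dim}(A)+\beta(s_B)>d$) are all correct and match the source. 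The one place where your write-up diverges, and where you flag a ``principal obstacle,'' is largely an artifact of the order in which you do the steps: by applying Plancherel first you create the cross term $\hat\mu_B(\eta-\xi)$ and then must decouple it. Liu --- and the paper's Section \ref{sec: product set}, in the unpinned setting --- instead perform the group-action decomposition in physical space first (covering $\{\,||a-b|-|a'-b'||\le\varepsilon\,\}$ by $\sim\varepsilon^{-1}$ rotations $\theta\in O(d)$ and passing to the densities $\lambda_\theta^A,\lambda_\theta^B$), and only then go to the Fourier side, where $\widehat{\lambda_\theta^A}(\xi)=\hat\mu_A(\xi)\overline{\hat\mu_A(\theta^{-1}\xi)}$ and a single Cauchy--Schwarz in $(\theta,\xi)$ hands you the product of spherical averages with no coupling to remove (the pinned upgrade requires Liu's exact $L^2$ identity rather than the inequality, but the structure is the same). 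So the step you identify as the crux is genuinely the heart of the matter, you name the correct tool for it, and the rest of your sketch is sound; to make this a complete proof you would need to actually carry out that identity rather than gesture at it, but there is no conceptual gap.
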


Back to our problem, where $A=E_1\times E_2\subset \R^2$ and $B=\mathcal{D}_{E\times E}\subset \R^2$. For all $s_B<{\rm dim}(B)={\rm dim}(E)$, (\ref{eqn: Liu1}) holds true. If one assumes that $s_B>\frac{1}{2}$, then (\ref{eqn: Liu2}) holds true with $\beta(s_B)=\frac{1}{2}$ according to Lemma \ref{lemmaforB}. Therefore, to conclude the pinned result in Theorem \ref{twosetsthm}, it suffices to have the condition ${\rm dim}(A)+\beta(s_B)>2$, which can be implied by ${\rm dim}(E)>\frac{3}{4}$. The proof of Theorem \ref{twosetsthm} and hence Theorem \ref{thm: measure} is complete.

\subsection{Some further remarks}

     Both strategies described above can take care of more general box sets. Given $E,\,F,\,G\subset \R$ with $\text{dim}_{\mathcal{H}}(G)>\frac{1}{2}$ and $\text{dim}_{\mathcal{H}}(E\times F)>3/2$. Define their associated box set as $\Box_G(E, F):=\{|(y,z)-(g,g)|:\, y\in E,\, z\in F, g\in G\}$ and similarly its pinned version $\Box_{g}(E, F)$. Then by taking $A=E\times F$ and $B=\mathcal{D}_G=\{(g,g)\colon g\in G\}$ in the argument above, one gets the following.

\begin{cor}\label{n2d1}
    Let $E,\,F,\,G\subset \R$ with 
    $$\text{dim}_{\mathcal{H}}(G)>\frac{1}{2} \text{ and }\text{dim}_{\mathcal{H}}(E\times F)>\frac{3}{2}.$$
    Then $\mathcal{L}^{1}(\Box_{G}(E, F))>0$ and there exists $g\in G$ such that $\mathcal{L}^1(\Box_g(E, F))>0$.
\end{cor}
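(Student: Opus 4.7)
The plan is to mimic exactly the strategy used in the proof of Theorem \ref{twosetsthm}, but with the diagonal of $E\times E$ replaced by the diagonal of $G\times G$. Concretely, I would set
\[
A := E\times F \subset \R^2, \qquad B := \mathcal{D}_G = \{(g,g):g\in G\}\subset \R^2,
\]
and observe that $\Box_G(E,F) = \Delta(A,B)$ while $\Box_g(E,F)$ coincides with the pinned distance set $\Delta_{(g,g)}(A)$. Since $g\mapsto (g,g)$ is a bi-Lipschitz map from $G$ onto $\mathcal{D}_G$, we have $\dim_{\mathcal{H}}(B) = \dim_{\mathcal{H}}(G) > 1/2$, while by hypothesis $\dim_{\mathcal{H}}(A) = \dim_{\mathcal{H}}(E\times F) > 3/2$. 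Thus the corollary is reduced to a two-set distance statement in $\R^2$ with Hausdorff dimension assumptions that match precisely those appearing in the rest of this section.

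For the unpinned part I would simply invoke Proposition \ref{propAB} with the pair $(A,B)$ defined above: its hypotheses ($\dim(A)>3/2$ and $\dim(B)>1/2$) are met, so its conclusion gives $\nu_{A,B}\in L^2(\R)$, which in turn forces $\mathcal{L}^1(\Delta(A,B)) = \mathcal{L}^1(\Box_G(E,F)) > 0$ by the usual Cauchy--Schwarz argument.

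For the pinned part the plan is to feed the same $(A,B)$ into Theorem \ref{Liu} with $d=2$. The Frostman-ball condition (\ref{eqn: Liu1}) is available for any exponent $s_B<\dim_{\mathcal{H}}(B)=\dim_{\mathcal{H}}(G)$, and in particular for some $s_B>1/2$ by assumption. The spherical $L^2$-decay hypothesis (\ref{eqn: Liu2}) with $\beta(s_B)=1/2$ is exactly the estimate furnished by Lemma \ref{lemmaforB} applied to $B$, whose proof only uses a Frostman condition of dimension strictly greater than $1/2$ and no structural feature of the set. The remaining requirement $\dim(A)+\beta(s_B)>d$ becomes $\dim(A)+1/2>2$, i.e.\ $\dim(A)>3/2$, which is in hand. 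Theorem \ref{Liu} then yields $\mathcal{L}^1(\Delta_b(A))>0$ for $\mu_B$-almost every $b\in B$; transporting any such $b=(g,g)$ back to $G$ produces a point $g\in G$ with $\mathcal{L}^1(\Box_g(E,F))>0$.

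The only conceptual point to check is that Lemma \ref{lemmaforB} really applies to $B=\mathcal{D}_G$, which is a ``singular'' (one-dimensional) subset of $\R^2$. This is not an obstacle, because inspecting the proof of that lemma shows that the estimate $\int_{S^1}|\hat\mu_B(r\omega)|^2\,d\sigma(\omega)\lesssim r^{-1/2}$ depends only on the finiteness of the $1/2$-energy of $\mu_B$, hence only on the Frostman exponent of $B$ exceeding $1/2$. Once this is recognized, both halves of the corollary follow with no further computation.
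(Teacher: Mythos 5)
Your proposal is correct and is precisely the argument the paper intends: the paper's proof of Corollary \ref{n2d1} consists exactly of setting $A=E\times F$, $B=\mathcal{D}_G$ and running Proposition \ref{propAB} (unpinned) and Theorem \ref{Liu} with Lemma \ref{lemmaforB} (pinned), as you do. Your added observation that Lemma \ref{lemmaforB} uses only the $\tfrac{1}{2}$-energy bound, hence applies to the singular set $\mathcal{D}_G$, is a correct and worthwhile sanity check that the paper leaves implicit.
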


Furthermore, the same proof strategies also work in all dimensions $d\geq 1$. However, they will only yield a dimensional threshold that is much worse than the one obtained by the bilinear method in Section \ref{unpinned} if $d\geq 2$. More generally, for $E\subset \mathbb{R}^d$, $d\geq 1$, one can in fact consider multilinear versions of $\Box(E)$. For $n\geq 2$, let
    \begin{equation}
        \Box_n(E)=\{|(y^1,y^2,\dots, y^n)-(x,x,\dots,x)|\colon x,y^i\in E,1\leq i\leq n,\, y^i\neq y^j \text{ if } i\neq j\},
    \end{equation}
    so when $n=2$ one just recovers $\Box(E)$. Similarly, one can define the pinned version $\Box_{n,x}(E)$.

    Adapting the proof of Theorem \ref{twosetsthm}, we have the following corollary.

\begin{cor}%\label{cord1n3}
        Let $n\geq 2$ and $d\geq 1$. Let $E\subset \R^d$ satisfy 
        \begin{equation}
            \text{dim}_{\mathcal{H}}(E)>\begin{cases}
            3/4,\,\text{if }(n,d)=(2,1),\\
                d\left(\frac{n}{n+1}\right),\,\text{if }(n,d)\neq (2,1).
            \end{cases}
        \end{equation}
        Then, there exists $x\in E$ such that $\mathcal{L}^1(\Box_{n,x}(E))>0$. 
    \end{cor}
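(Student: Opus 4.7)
The plan is to generalize the proof of Theorem \ref{twosetsthm} by realizing $\Box_{n,x}(E)$ as a pinned distance set in $\mathbb{R}^{nd}$ whose pin lies in the $n$-fold diagonal $\mathcal{D}_n := \{(x,x,\ldots,x) : x \in E\}$. Iterating Lemma \ref{lem: subsets} (a refined version yielding $n$ pairwise separated positive-measure subsets via a pigeonhole on dyadic cubes at sufficiently fine scale), first extract $E_1,\ldots,E_n \subset E$, each carrying a Frostman measure $\mu_i$ (the restriction of $\mu$) satisfying the $s$-dimensional ball condition for every $s < \dim(E)$. Set $A := E_1 \times \cdots \times E_n \subset \mathbb{R}^{nd}$ equipped with $\mu_A = \mu_1 \otimes \cdots \otimes \mu_n$, and $B := \mathcal{D}_n$ equipped with the push-forward $\mu_B$ of $\mu$ under the diagonal embedding. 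Because the $E_i$ are mutually separated, the constraint $y^i \neq y^j$ is automatic when $y^i \in E_i$, so $\Box_{n,x}(E) \supset \Delta_{(x,\ldots,x)}(A)$, and it suffices to produce some $x \in E$ for which the latter has positive Lebesgue measure.

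A direct computation shows $\mu_A$ is $(ns)$-Frostman on $A$ (so $\dim(A) \geq ns$), and $\mu_B$ is $s$-Frostman in $\mathbb{R}^{nd}$ because differences along $\mathcal{D}_n$ rescale isometrically by $\sqrt{n}$. To apply Theorem \ref{Liu}, I need a Fourier decay estimate for $\mu_B$. Expanding $\int_{S^{nd-1}} |\hat{\mu}_B(R\omega)|^2\, d\sigma(\omega)$ via Plancherel, using the classical bound $\hat\sigma_{nd-1}(\xi) \lesssim (1+|\xi|)^{-(nd-1)/2}$, and observing that $(x,\ldots,x)-(y,\ldots,y)$ has norm $\sqrt{n}|x-y|$, the left-hand side is dominated by
\[
\int (1 + R|x-y|)^{-(nd-1)/2}\, d\mu(x)\,d\mu(y).
\]
A standard dyadic splitting against the $s$-Frostman bound on $\mu$ yields $\beta(s_B) = \min(s,(nd-1)/2)$, up to arbitrarily small $\eps$ losses.

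Feeding this into the hypothesis $\dim(A) + \beta(s_B) > nd$ of Theorem \ref{Liu} gives two regimes. If $s \leq (nd-1)/2$, take $\beta = s$, so the condition becomes $(n+1)s > nd$, i.e., $s > \frac{dn}{n+1}$. If $s > (nd-1)/2$, take $\beta = (nd-1)/2$, which yields $s > \frac{d}{2}+\frac{1}{2n}$. The inequality $\frac{dn}{n+1} \leq \frac{nd-1}{2}$ reduces to $nd(n-1) \geq n+1$, which holds for every admissible $(n,d)$ except $(n,d)=(2,1)$; in all these cases the first regime applies and produces the threshold $\frac{dn}{n+1}$ stated in the corollary. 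In the exceptional pair $(n,d) = (2,1)$ one has $\frac{dn}{n+1} = \frac{2}{3} > \frac{1}{2} = \frac{nd-1}{2}$, forcing the second regime and yielding the threshold $\frac{3}{4}$, consistent with Theorem \ref{twosetsthm}. Theorem \ref{Liu} then produces $\mu_B$-almost every pin $b = (x,\ldots,x) \in B$ with $\mathcal{L}^1(\Delta_b(A)) > 0$, giving a suitable $x \in E$. The main technical step is the Fourier decay of the push-forward $\mu_B$: the subtle point is that $B$ has intrinsic dimension only $s$ inside an $nd$-dimensional ambient space, but the uniform rescaling of differences by $\sqrt{n}$ is exactly what allows one to transfer the $(nd-1)/2$ decay of $\hat\sigma_{nd-1}$ onto the $d$-dimensional Frostman energy of $\mu$.
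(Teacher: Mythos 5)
Your proposal is correct and follows essentially the same route as the paper: realize $\Box_{n,x}(E)$ as a pinned distance set between $A=E_1\times\cdots\times E_n\subset\R^{nd}$ and the diagonal $B=\mathcal{D}_E$, verify the spherical Fourier decay $\beta(s)=\min\bigl(s,\tfrac{nd-1}{2}\bigr)$ for $\mu_B$ (which is Mattila's classical bound, cited in the paper and rederived by you), and apply Theorem \ref{Liu} with the condition $\dim(A)+\beta(s_B)>nd$, the two regimes of $\beta$ producing exactly the split between $(n,d)=(2,1)$ and the remaining cases. Your write-up is if anything slightly more explicit than the paper's sketch about the non-degeneracy reduction and the case analysis.
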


\begin{proof}
The case $(n,d)=(2,1)$ is already included in Corollary \ref{n2d1}, so let us assume $(n,d)\neq (2,1)$.
We only give a sketch as the argument is very similar to the proof of Theorem \ref{twosetsthm}. 

Let $A=E^n\subset \R^{nd}$ and $B=\mathcal{D}_E:=\{(x,x,\dots,x)\colon x\in E\}\subset \R^{nd}$. Let $\beta(s_B)$ be the decay rate in (\ref{eqn: Liu2}). It is a classical result of Mattila \cite{Mattila87} that (\ref{eqn: Liu2}) holds true with
\[
\beta(s)=\begin{cases} s,& s \in (0, \frac{nd-1}{2}],\\ \frac{nd-1}{2}, & s \in [\frac{nd-1}{2}, \frac{nd}{2}]. \end{cases}
\]Therefore, the desired result follows easily by applying Theorem \ref{Liu} in the suitable range of dimensions. For example, if $n\geq 3$, then there holds $\frac{nd-1}{2}\geq d\geq {\rm dim}(B)>s_B$, hence, $\beta(s_B)=s_B$. Since ${\rm dim}(A)\geq n{\rm dim}(E)$, one can take $s_B={\rm dim}(E)-\epsilon$ and concludes from the condition ${\rm dim}(E)> \frac{nd}{n+1}$ that ${\rm dim}(A)+\beta(s_B)>nd$. The rest of the cases can be taken care of similarly and are left to the interested reader.
\end{proof}

\begin{rem}
Following the same proof strategy in \cite{GIOW} for the two dimensional Falconer distance problem, one can in fact show another result concerning the joint distance set $\Delta(A, B)$ for separated compact sets $A, B\subset \mathbb{R}^2$. More precisely, If ${\rm dim}(A)>1$, ${\rm dim}(B)>1$ and $3{\rm dim}(A)+{\rm dim}(B)>5$, then there exists $x\in B$ such that $\mathcal{L}^1(\Delta_x(A))>0$.

Unfortunately, this estimate cannot be applied to our box set problem, since in our case $B=\mathcal{D}_{E\times E}$ has Hausdorff dimension less than $1$. A similar obstruction remains in higher dimensions for applications to the study of the box set problem.
\end{rem}

\vskip.125in 

\section{The projection method} \label{sec: projection}

\vskip.125in 

In this section, we introduce yet another approach that allows us to handle a variety of problems. A systematic study will be undertaken in the sequel, but we introduce the notion here and prove some sample results.

\subsection{Positive measure for set of lengths of non-degenerate chains}
For a compact set $E\subset \R^d,\, d\geq 2$ and $k\geq 1$, the set of non-degenerate $k$-chains generated by $E$ is given by
$$ C^k(E)=\{(|x_0-x_1|, |x_1-x_2|, \dots, |x_{k-1}-x_{k}|): x_j \in E,\,x_0,x_1\dots,x_{k} \text{ distinct}  \}\subset \R^k.$$

For any $x\in E$, one can also consider a pinned version of $C^{k}(E)$ comprised of non-degenerate $k$-chains generated by $E$ with starting place at $x$, namely
$$ C^k_{x}(E)=\{(|x-x_1|, |x_1-x_2|, \dots, |x_{k-1}-x_{k}|): x_j \in E,\,x_1,x_2\dots,x_{k} \text{ distinct}  \}.$$

It was proved by the third listed author of this paper and Krystal Taylor (\cite{OT2020}) that if the Hausdorff dimension of $E \subset {\mathbb R}^d$, $d \ge 2$ is greater than $s_0$, where $s_0$ is the best-known threshold for the pinned Falconer distance problem, then there exists $x\in E$ such that 
$$ {\mathcal L}^k(C^k_x(E))>0\,\text{ for all }k\geq 1,$$
where $\mathcal{L}^k$ stands for the Lebesgue measure in $\R^k$.
For $k\geq 2$, let us now look at the set of lengths attained by non-degenerate $k$-chains in $E$
$$L^{k}(E):=\{t_1+t_2+\dots+t_k\colon (t_1,\dots, t_k)\in C^k(E)\}\subset \R$$
and its pinned version
$$L^{k}_x(E)=\{t_1+t_2+\dots+t_k\colon (t_1,\dots, t_k)\in C^k_x(E)\},\, \text{ where }x\in E.$$

For simplicity, take the case $k=2$. Consider a map 
$$ (t_1,t_2) \mapsto (t_1+t_2, t_1-t_2)$$ and note that the Jacobian is bounded from above and below. It follows that the image of $C^2_x(E)$ under this map still has a positive two-dimensional Lebesgue measure, and hence the projection of this image onto the first variable (or the second, for that matter) has a positive one-dimensional Lebesgue measure. That is exactly to say that $L^{2}_x(E)$ has positive one-dimensional Lebesgue measure when dim$(E)>s_0$. %With a bit more work, we can also show that $\Box(E)$ contains an interval, but this is not the point. 
More generally, for any $k\geq 2$, by picking a suitable linear transformation $T^k:\R^{k}\rightarrow \R^k$ with $(\pi_1\circ T)(t_1,t_2,\dots ,t_k)=\sum_{i=1}^{k} t_i$ and Jacobian comparable to 1, we conclude that $\mathcal{L}^1(L_x^{k}(E))>0$ when $\text{dim}(E)>s_0$.

\begin{rem}
    One can easily recover from this projection argument part of Theorem \ref{thm: interval}, more precisely the fact that $\Box(E)$ has positive measure for $d\geq 2$ and $E\subset \R^d$ compact with dimension larger than $s_0$. Indeed, it is straightforward to see that the argument in \cite{OT2020} still works if one replaces the set of chains $C_x^k(E)$ by 
    \[
    \tilde{C}_x^k(E):=\{(|x-x_1|^2,|x_1-x_2|^2,\ldots, |x_{k-1}-x_k|^2):\, x_j\in E,\,x_1,x_2\dots,x_{k} \text{ distinct}\}.
    \]Then, replacing its lengths set $L_x^k(E)$ by
    \[
    \tilde{L}_x^k(E):=\{t_1+t_2+\cdots + t_k:\, (t_1,\ldots, t_k)\in \tilde{C}_x^k(E)\},
    \]when ${\rm dim}(E)>s_0$, one has that $\mathcal{L}^1(\tilde{L}_x^k(E))>0$, which in the $k=2$ case implies that $\Box(E)$ has positive Lebesgue measure.
\end{rem}

\subsection{Nonempty interior for lengths of non-degenerate $k$ chains}
The method above doesn't recover the full version of Theorem \ref{thm: interval} as it generally doesn't imply a strong enough nonempty interior result. It is indeed true that if the $k$ chain set has nonempty interior, then the method above implies that the same holds for its lengths set, and similarly for the box set. Unfortunately, the currently best known nonempty interior result for chains \cite{BIT} only guarantees a dimensional threshold $\frac{d+1}{2}$, much weaker than the one in Theorem \ref{thm: interval}.  

However, by combining the projection method with the sum set approach introduced in Section \ref{sumsetsection}, we can directly get nonempty interior result for $L^k(E)$, when $k\geq 2$. Note that the case $k=2$ was already taken care of by the sum set method alone in Remark \ref{lengthstwochains}.

Observe that for $k\geq 3$ and $x_1\in E$ fixed
\begin{equation}
    \begin{split}
        \{|x_0-x_1|&+|x_1-x_2|+\dots+|x_{k-1}-x_k|\colon x_0, x_2,x_3,\dots, x_k\in E\}\\
        =&\{|x_0-x_1|\colon x_0\in E\}\\
        &+\{|x_1-x_2|+|x_2-x_3|+\dots +|x_{k-1}-x_k|:\,x_2,x_3,\dots,x_k\in E\}
    \end{split}
\end{equation}
which contains the sum set 
$\Delta_{x_1}(E)+L^{k-1}_{x_1}(E)$. 

By the result in \cite{OT2020}, and the previous projection argument, we know there exists $x_1\in E$, such that, both $\Delta_{x_1}(E)$ and $L_{x_1}^{k-1}(E)$ have positive Lebesgue measure. Therefore, by Steinhaus theorem, $\Delta_{x_1}(E)+L^{k-1}_{x_1}(E)$ contains an interval. That implies that the (potentially degenerate) pinned length set 
$$\{|x_0-x_1|+|x_1-x_2|+\dots+|x_{k-1}-x_k|\colon x_0, x_2,x_3,\dots, x_k\in E\}$$
contains an interval. By slightly modifying this argument, we claim one can get nonempty interior for the non-degenerate pinned length set  
$$L^{k}_{x_1,1}(E)=\{|x_0-x_1|+|x_1-x_2|+\dots+|x_{k-1}-x_k|\colon x_0, x_2,x_3,\dots, x_k \text{ distinct points in } E\backslash\{x_1\}\}$$
where the sub-index $1$ indicates the pin is at the second point of the $k$-chain.

Applying Lemma \ref{lem: subsets} twice we can find $E_1,E_2,E_3$ pairwise separated subsets of $E$ with $\mu(E_i)>0$, where $\mu$ is a Frostman measure in $E$. Let $\mu_i$ a Frostman measure in $E_i$ with $\mu_i(B(x,r))\leq C_sr^s$ for $s>s_0$.
From the proof in \cite{OT2020}, one can check that 
$$\mu_3(\{x_1\in E_3\colon \mathcal{L}^1(\Delta_{x_1}(E_1))>0 \})>1/2$$
and
$$\mu_3(\{x_1\in E_3\colon \mathcal{L}^{k-1}(C_{x_1}^{k-1}(E_2)))>0 \})>1/2.$$
So one can find $x_1\in E_3$ such that both $\mathcal{L}^1(\Delta_{x_1}(E_1))>0$ and $\mathcal{L}^{k-1}(C_{x_1}^{k-1}(E_2))>0$. For such $x_3\in E_3$, we then have 
\begin{equation}
    \begin{split}
        \Delta_{x_1}(E_1)+L^{k-1}_{x_1}(E_2)\subset L^{k}_{x_1,1}(E)
    \end{split}
\end{equation}
where $\Delta_{x_1}(E_1)$ and $L^{k-1}_{x_1}(E_2)$ both have positive Lebesgue measure. Therefore, $L^{k}_{x_1,1}(E)$ and consequently $L^{k}(E)$ contain an interval.

We also observe that similar argument allows one to show the following. For any $k\geq 3$, $0<l<k$, and a point $x_l$ fixed, define
\begin{equation*}
    \begin{split}
       L^{k}_{x_l,l}(E):=&\{|x_0-x_1|+|x_1-x_2|+\dots +|x_{k-1}-x_k|\colon\\
       &x_0,x_1,\dots ,x_{l-1},x_{l+1},\dots x_k\text{ distinct points in } E\backslash \{x_l\}\}.
    \end{split}
\end{equation*}
Then there exists $x_{l}\in E$ such that $L_{x_l,l}^k(E)$ contains an interval. The idea is similar as above: in this situation, apart from non-degeneracy issues, one has $L^{k}_{x_l,l}(E)=L^{l}_{x_l}(E)+L_{x_l}^{k-l}(E)$.

\subsection{Perimeters of triangles}
We shall now illustrate how to apply this projection method in a situation that does not lend itself to the analysis employed in this paper up to this point. 

Let $T_2(E)$ denote the set of congruence classes of triangles with endpoints in $E \subset {\mathbb R}^d$, $d \ge 2$. It was shown in \cite{GILP2015} that if the Hausdorff dimension of $E \subset {\mathbb R}^2$ is greater than $\frac{8}{5}$, then the three-dimensional Lebesgue measure of $T_2(E)$ is positive. It was previously shown by Erdo\u{g}an and the second listed author (unpublished) that the critical threshold for this program is at least $\frac{3}{2}$. Applying the same method as above, we consider a map 
$$ (t_1,t_2,t_3) \mapsto (t_1+t_2+t_3, t_2, t_3)$$ and note that the Jacobian is bounded from above and below. We conclude that the set $P_2(E)$ of perimeters of triangles determined by triples of points in $E$ has a positive Lebesgue measure if the Hausdorff dimension of $E$ is greater than $\frac{8}{5}$. Corresponding results can be established in higher dimensions by the same method.

In the sequel, we are going to take on a systematic investigation of this method and its consequences. For example, there is a non-trivial result in \cite{GILP2015} about the $(k+1)$-dimensional Lebesgue measure of $T_k(E)$, the set of $k$-dimensional simplices determined by $E \subset {\mathbb R}^d$. If $d \ge 3$ and $k=3$, for example, it is interesting to ask if the set of areas of pyramids formed by the $4$-point configurations defining $T_3(E)$ has a positive Lebesgue measure. This will be one of many questions we will be investigating. 

\vskip.25in 

\section{$\Box(E)$ in vector spaces over finite fields}\label{sec: finite fields}

\vskip.125in 

The purpose of this section is to prove some analogs of the results of this paper in vector spaces over finite fields. Let ${\mathbb{F}}_q$ denote the finite field with $q$ elements, and let ${\mathbb{F}}_q^d$ denote the $d$-dimensional vector space over this field. Given $E \subset {\mathbb{F}}_q^d$, define 
$$ \Delta(E)=\{||x-y||: x,y \in E\}\subseteq \mathbb{F}_q,$$ where 
$$ ||x||=x_1^2+\dots+x_d^2.$$

The second listed author and Rudnev \cite{IR2007} proved that if $|E| \ge 3q^{\frac{d+1}{2}}$, then 
$$ \Delta(E)={\mathbb{F}}_q,$$ and Hart, the second listed author, Koh and Rudnev \cite{HIKR} proved that in odd dimensions this result cannot be improved in the sense that if $|E|=o(q^{\frac{d+1}{2}})$, then $|\Delta(E)|=o(q)$. The pinned version follows from \cite{BCCHIP2016}. A short argument is given below. 

In two dimensions, the exponent $\frac{3}{2}$ was improved to $\frac{4}{3}$ by Bennett, Hart, the second listed author, Pakianathan and Rudnev \cite{BHIPR}. The pinned version was proved in \cite{HLR16}. If $q$ is further assumed to be prime, the exponent was improved to $\frac{5}{4}$ by Murphy, Petridis, Pham, Rudnev and Stevens \cite{MPPRS}. 

In higher even dimensions, we do not know if the exponent $\frac{d+1}{2}$ can be improved. 

\vskip.125in 

Given $E \subset {\mathbb{F}}_q^d$, define 
$$ \Box(E)=\{||x-y||+||x-z||: x,y,z \in E,\, y\neq z\}.$$

\vskip.125in 

We have the following analog of our results in Euclidean space. 

\begin{thm} \label{ffanalog} Let $E$ be a subset of ${\mathbb{F}}_q^d$, $d \ge 1$, and let $\Box(E)$ be defined as above. Then \begin{itemize} 

\item i) If $d=1$ and $|E|\geq cq^{\frac{3}{4}}$, with $c$ sufficiently large, then $\Box(E)={\mathbb{F}}_q$, 

\vskip.125in 

\item ii) if $d=2$ and $|E| \ge cq^{\frac{4}{3}}$ with $c$ sufficiently large, then $\Box(E)={\mathbb{F}}_q$. 

\vskip.125in 

\item iii) if $d \ge 3$ and $|E| \ge cq^{\frac{d+1}{2}}$, then $\Box(E)={\mathbb{F}}_q$. 

\end{itemize} 

\end{thm}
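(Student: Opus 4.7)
The strategy in all three parts mirrors the sum-set approach of Section \ref{sumsetsection}, transplanted to the additive group $(\mathbb{F}_q,+)$ (we assume $q$ odd; characteristic $2$ needs only minor cosmetic changes). For each $x\in E$ set $\Delta_x(E):=\{||x-y||:y\in E\}\subset\mathbb{F}_q$ and observe $\Box(E)\supseteq\Delta_x(E)+\Delta_x(E)$ once the $y=z$ contribution is removed. The following elementary fact reduces the theorem to a pinned distance bound: if $|\Delta_x(E)|>q/2+1$ for some $x\in E$, then for every $t\in\mathbb{F}_q$ the intersection $\Delta_x(E)\cap(t-\Delta_x(E))$ has at least two elements, at least one of which differs from $t/2$, producing $y\neq z$ in $E$ with $||x-y||+||x-z||=t$. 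Thus in each case (i)--(iii) it suffices to produce $x\in E$ with $|\Delta_x(E)|>q/2+1$.

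For parts (ii) and (iii) this is the pinned form of the Iosevich--Rudnev \cite{IR2007} theorem (for $d\ge 3$) and of the Bennett--Hart--Iosevich--Pakianathan--Rudnev \cite{BHIPR} theorem (for $d=2$). The plan is to carry out the standard Fourier expansion of $\#\{y\in E:||x-y||=t\}=(\mathbf{1}_E*\mathbf{1}_{S_t})(x)$, where $S_t\subset\mathbb{F}_q^d$ is the sphere of squared radius $t$, together with the Weil/Kloosterman decay $|\widehat{\mathbf{1}_{S_t}}(\xi)|\lesssim q^{(d-1)/2}$ for $\xi\ne 0$. Cauchy--Schwarz followed by pigeonholing over $x\in E$ then gives $|\Delta_x(E)|\ge(1-\delta)q$ for a positive proportion of $x\in E$ once $|E|\ge C(\delta)q^{(d+1)/2}$ (resp.\ $|E|\ge C(\delta)q^{4/3}$ in $d=2$); choosing $\delta<1/2$ produces the required $x$.

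Part (i), $d=1$, is delicate because $\Delta_x(E)$ is contained in the set of squares in $\mathbb{F}_q$, whose size never exceeds $(q+1)/2$, so the sum-set reduction is vacuous and must be replaced. The plan is to mimic the Euclidean argument of Section \ref{sec: product set} and rewrite
\[
\Box(E)\;=\;\{||a-b||_{\mathbb{F}_q^2}:a\in E\times E,\;b\in\mathcal{D}_E\},\qquad \mathcal{D}_E:=\{(x,x):x\in E\},
\]
treating this as a two-set pinned distance problem in $\mathbb{F}_q^2$ with pin in $\mathcal{D}_E$. Since $|E\times E|\ge c^2q^{3/2}$ is above the 2D Iosevich--Rudnev threshold, the Cauchy--Schwarz bound $|\Delta_b(E\times E)|\ge|E|^4/\sum_t N_b(t)^2$, with $N_b(t)=\#\{a\in E\times E:||a-b||=t\}$, reduces matters to producing $b=(x,x)\in\mathcal{D}_E$ with $\sum_tN_b(t)^2\le 2|E|^4/q$. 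Summing this second moment over $x\in E$ and using the substitution $u=y-y'$, $v=z-z'$, the equidistance equation becomes linear in $x$ off the locus $u+v=0$, so the Weil bound on complete 1D Gauss sums combined with Plancherel on $E$ should yield the required estimate at the threshold $|E|\ge cq^{3/4}$.

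The main obstacle is part (i): the degenerate locus $u+v=0$ (equivalently $y+y'=z+z'$) contributes to the second moment not the expected $|E|^4/q$ but rather $\sim|E|^3$, coming from the ``swap'' solutions $(y',z')=(z,y)$ together with the trivial $(y',z')=(y,z)$. Balancing this degenerate contribution against the main term is exactly what pins down $q^{3/4}$ as the correct threshold, in analogy with the role of the exponent $3/4$ in the Euclidean argument of Section \ref{sec: product set}. Parts (ii) and (iii), by contrast, amount to bookkeeping over the existing pinned Falconer distance literature in finite fields.
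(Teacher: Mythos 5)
Parts ii) and iii) of your plan coincide in substance with the paper's: reduce to producing a pin $x$ with $|\Delta_x(E)|>q/2$ and conclude via the pigeonhole fact that $A+B=\mathbb{F}_q$ once $|A|+|B|>q$. The paper handles the constraint $y\neq z$ by splitting $E$ into disjoint subsets $E_1,E_2$ rather than by your intersection count (both work), quotes \cite{HLR16} for $d=2$, and for $d\ge 3$ derives the pinned bound from the precise estimate on the hinge count $\sum_t\sum_{x,y,z}S_t(x-y)S_t(x-z)E(x)E(y)E(z)$ proved in \cite{BCCHIP2016}; your ``Fourier expansion plus Weil decay plus Cauchy--Schwarz plus pigeonhole'' is the same computation.

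The genuine gap is in part i). Your framework ($A=E_1\times E_2$, $B=\mathcal{D}_E$, distance set in $\mathbb{F}_q^2$) matches the paper's, but the key estimate is missing and your diagnosis of where $q^{3/4}$ comes from is incorrect. The degenerate locus $y+z=y'+z'$, $y^2+z^2=y'^2+z'^2$ contributes only $O(|E|^3)$ to the second moment summed over the pin, and balancing $|E|^3$ against the expected main term $|E|^5/q$ forces merely $|E|\gtrsim q^{1/2}$, not $q^{3/4}$. The actual bottleneck is the restriction of $|\widehat{B}|^2$ to circles: the paper proves $\sum_{\|m\|=t}|\widehat{B}(m)|^2\le q^{-3}|B|+2q^{-7/2}|B|^2$ using the Weil--Sali\'e bound $|\widehat{S_t}(m)|\le 2q^{-3/2}$ (the finite-field analogue of Lemma \ref{lemmaforB}), and it is the resulting error term $2q^{1/2}|A||B|^2\sim q^{1/2}|E|^4$ against the main term $|E|^6/q$ that pins down $|E|\gtrsim q^{3/4}$. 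Your plan never produces this circle-restriction estimate for the diagonal measure. Moreover, you aim at the harder pinned statement (a single good $b=(x,x)$), whereas the paper only runs the unpinned second moment $\sum_t\nu^2(t)$ with both endpoints averaged, decomposed over the group $O_2(\mathbb{F}_q)$; in your pinned version the non-degenerate count becomes $\sum E(y)E(z)E(y')E(z')\,E\bigl(P/(2\sigma)\bigr)$ with $P=y^2+z^2-y'^2-z'^2$ and $\sigma=y+z-y'-z'$, a four-variable rational exponential sum restricted to $E^4$ rather than a complete one-dimensional Gauss sum, and ``Weil plus Plancherel'' as stated does not evaluate it. To close the argument you should either drop the pin and follow the unpinned group-action computation with the Sali\'e bound, or develop a finite-field analogue of the pinned machinery of \cite{Liu19}. (Note also that this second-moment argument yields $|\Box(E)|\ge q/2$ rather than $\Box(E)=\mathbb{F}_q$, which is all the paper itself obtains in case i).)
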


\vskip.125in 
 
\begin{rem} Item iii) is where we have the most substantial difference between the finite field and Euclidean settings. In the Euclidean setting, there is a variety of improvements over the $\frac{d+1}{2}$ Falconer exponent, which we use as input in our results above. In the finite field case, the exponent $\frac{d+1}{2}$ is the best available in dimensions three and higher. Moreover, in odd dimensions, this exponent, and the corresponding estimates, are known to be sharp. \end{rem}  

To prove ii) and iii), observe that 
$$ \{||x-y||+||x-z||: y,z \in E\}=\Delta_x(E)+\Delta_x(E).$$ 

Just as in the Euclidean case, we can make sure that $y$ and $z$ above are never equal by pigeon-holing to disjoint subsets $E_1, E_2$ of $E$ such that $|E_i| \ge \frac{1}{3}|E|$. In the finite field case, this procedure is quite straightforward since we can simply divide $E$ into two sets of essentially the same size in an arbitrary manner. 

In the case of ii), the conclusion follows instantly from the pinned result in (\cite{HLR16} mentioned above. We need to work a tiny bit harder to prove iii). 

In the case of iii), the pinned version for $\Delta_x(E)$ is deduced from the known results for $\Delta(E)$ as follows. Define 
$$ \nu_x(t)=\sum_{||x-y||=t} E(y)$$ where $E(y)$ is the indicator function of the set $E$, and observe that 

$$ \sum_t \nu_x(t)=|E|.$$

Squaring this expression and applying Cauchy-Schwarz, we see that 

$$ {|E|}^2 \leq |\Delta_x(E)| \cdot \sum_t \nu_x^2(t). $$

Summing both sides with respect to $x \in E$, we obtain 
\begin{equation*}
    \begin{split}
        {|E|}^3 \leq& \sum_{x \in E} |\Delta_x(E)| \cdot \sum_t \nu_x^2(t)\\
        \leq& \max_{x \in E} |\Delta_x(E)| \sum_{x \in E} \sum_t \nu_x^2(t)
    \end{split}
\end{equation*}

\begin{equation} \label{hinge}=\max_{x \in E} |\Delta_x(E)| \cdot \sum_{t}\sum_{x,y,z} S_t(x-y)S_t(x-z) E(x)E(y)E(z),\end{equation}

where $S_t$ is the indicator function of 
$$ \{x \in {\mathbb{F}}_q^d: ||x||=t\}.$$

It was proved in \cite{BCCHIP2016} that 
$$ \sum_{x,y,z} S_t(x-y)S_t(x-z) E(x)E(y)E(z)={|E|}^3q^{-2}+D(E),$$ where 
$$ |D(E)| \leq \frac{4}{\log(2)} q^{\frac{d+1}{2}} \cdot \frac{{|E|}^2}{q^2}. $$

It follows that 

$$ \max_{x \in E} |\Delta_x(E)| \sum_{x \in E} \sum_t \nu_x^2(t) \leq  \max_{x \in E} |\Delta_x(E)| ({|E|}^3q^{-1}+q|D(E)|),$$ and the conclusion follows using the estimate on $D(E)$ above. 

\vskip.125in 

It remains to prove part i). For the sake of simplicity, we shall write down the proof in the case when $q=p^n$, with $p \equiv 3 \mod 4$. If $p \equiv 1 \mod 4$, the proof goes through by following the argument in \cite{BHIPR}. 

As before, given a $E\subset \mathbb{F}_q$, let $E_1, E_2$ be two disjoint subsets of $E$ with $|E_i|\geq \frac{1}{3}|E|$. Consider
$$ A=E_1 \times E_2, \ \text{and} \ B=\{(x,x): x \in E\}.$$

Define 
$$ \nu(t)=\sum_{x,y\in \mathbb{F}_q^2} A(x)B(y)S_t(x-y),$$ where $A,B$ are the indicator function of $A,B$, respectively, and $S_t$ is the indicator function of the circle 
$$ \left\{x \in {\mathbb{F}}_q^2: ||x|| \equiv x_1^2+x_2^2=t \right\}.$$ 

We have 
\begin{equation*}
    \begin{split}
        \sum_{t\in \mathbb{F}_q} \nu^2(t)=&\sum_{||x-y||=||x'-y'||} A(x)A(x')B(y)B(y')\\
        =&|A||B|+\sum_{\theta \in O_2({\mathbb{F}}_q)} \sum_{x'-y'=\theta(x-y); x \not=y; x' \not=y'} A(x)A(x')B(y)B(y')\\
        \leq& |A||B|+\sum_{\theta \in O_2({\mathbb{F}}_q); z \in \mathbb{F}^2_q} \lambda^A_{\theta}(z) \lambda^B_{\theta}(z),
    \end{split}
\end{equation*}

where $$ \lambda^A_{\theta}(z)=|\{(u,v) \in A \times A: u-\theta v=z \}|, $$ and similarly for $\lambda^B_{\theta}(z)$. 

\vskip.125in 

Observe that 
$$ \sum_{z} f(z) \lambda^A_{\theta}(z)=\sum_{u,v} f(u-\theta v)A(u)A(v).$$ 

The second formulation, while equivalent to the first, allows us to compute the Fourier transform of $\lambda^A_{\theta}$ and $\lambda^B_{\theta}$ in a simple way. 

\vskip.125in 

We have so far shown that 
$$ \sum_{t\in \mathbb{F}_q} \nu^2(t) \leq |A||B|+\sum_{\theta \in O_2(\mathbb{F}_q); z \in \mathbb {F}_q^2} \lambda^A_{\theta}(z)\lambda^B_{\theta}(z).$$ 

We have 
\begin{equation*}
    \begin{split}
        \sum_{\theta \in O_2(\mathbb {F}_q); z \in \mathbb {F}_q^2} \lambda^A_{\theta}(z) \lambda^B_{\theta}(z)
=&q^2 \sum_{\theta \in O_2(\mathbb {F}_q)} \sum_{m\in \mathbb{F}_q^2} \widehat{\lambda^A_{\theta}}(m) \overline{\widehat{\lambda^B_{\theta}}(m)}\\
=&q^6 \sum_{\theta \in O_2(\mathbb {F}_q)} \sum_{m\in \mathbb{F}_q^2} \widehat{A}(m){ \widehat{A}(\theta m)}  \overline{\widehat{B}(m)} \overline{\widehat{B}(\theta m)}\\
\leq&q^6 \cdot q^{-8} {|A|}^2 {|B|}^2 \cdot |O_2(\mathbb {F}_q)|\\
+&q^6 \sum_{\theta \in O_2(\mathbb {F}_q)} \sum_{m \not=(0,0)} |\widehat{A}(m) { \widehat{A}(\theta m)}  \overline{\widehat{B}(m)\widehat{B}(\theta m)}|\\
 \leq& q^6 \cdot q^{-8} {|A|}^2 {|B|}^2 \cdot |O_2(\mathbb {F}_q)|+q^6\sum_m {|\widehat{A}(m)|}^2 \sum_{\theta}{|\widehat{B}(\theta m)|}^2.
    \end{split}
\end{equation*}

The key is to estimate (when $m \not=(0,0)$)
$$ \sum_{\theta}  {|\widehat{B}(\theta m)|}^2.$$ 

This is equivalent to estimating 
$$ \sum_{||m||=t} {|\widehat{B}(m)|}^2 \ \text{with} \ t \not=0.$$ 

We have 
\begin{equation*}
    \begin{split}
        \sum_{||m||=t} {|\widehat{B}(m)|}^2=&q^{-4} \sum_{x,y} \sum_{m} \chi((x-y) \cdot m) S_t(m) B(x)B(y) \\
        =&q^{-2} \sum_{x,y} \widehat{S}_t(x-y)B(x)B(y)\\ 
=&q^{-2} \cdot q^{-2} \cdot |S_t| \cdot |B|+q^{-2} \sum_{x \not=y} \widehat{S}_t(x-y)B(x)B(y)\\
\leq& q^{-3} |B|+q^{-2} \sum_{x \not=y} 2 \cdot q^{-\frac{3}{2}} \cdot B(x)B(y)\\
\leq& q^{-3}|B|+2 \cdot q^{-2} \cdot q^{-\frac{3}{2}} \cdot {|B|}^2,
    \end{split}
\end{equation*}
where we have used the fact (Weil-Salie bound; see e.g. Lemma 2.2 in \cite{IR2007}) that 
$$ |\widehat{S}_t(m)| \leq 2q^{-\frac{3}{2}} \ \text{if} \ m \not=(0,0).$$

Plugging the estimate above back in, we see that 
$$ \sum_{t\in \mathbb{F}_q} \nu^2(t) \leq {|A|}^2 {|B|}^2 q^{-1}+2q^{\frac{1}{2}} |A| \cdot {|B|}^2+\text{much smaller terms}.$$ 

By Cauchy-Schwarz,
$$ {|A|}^2 {|B|}^2={\left( \sum_{t\in \mathbb{F}_q} \nu(t) \right)}^2 \leq |\Delta(A,B)| \cdot 2 \cdot {|A|}^2 \cdot {|B|}^2 \cdot q^{-1}$$ if
$$ |A|>2q^{\frac{3}{2}}.$$

It follows that 
$$ |\Box(E)|\geq |\Delta(A,B)| \ge \frac{q}{2} \quad \text{if} \ |A|> \sqrt{2} \cdot q^{\frac{3}{2}}.$$
Since $|A|=|E_1||E_2|\geq \frac{1}{9}|E|^{2}$, we have  $|A|>2q^{3/2}$ as long as $|E|>\sqrt{18}q^{3/4}$,
and the proof is complete.

\vskip.25in

\bibliographystyle{alpha}
\bibliography{sources}

\newcommand{\etalchar}[1]{$^{#1}$}
\begin{thebibliography}{DGO{\etalchar{+}}21}

\bibitem[BCC{\etalchar{+}}16]{BCCHIP2016}
Michael (1-RIT-SM) Bennett, Jeremy (1-LYON) Chapman, David (1-MOSL-CS) Covert,
  Derrick (1-HURST-M) Hart, Alex (1-RCT) Iosevich, and Jonathan (1-RCT)
  Pakianathan.
\newblock Long paths in the distance graph over large subsets of vector spaces
  over finite fields.
\newblock {\em J. Korean Math. Soc.}, 53(1):115--126, 2016.

\bibitem[BF23]{borgesfoster}
Tainara Borges and Benjamin Foster.
\newblock Bounds for lacunary bilinear spherical and triangle maximal
  functions.
\newblock {\em preprint arXiv:2305.12269}, 2023.

\bibitem[BFO{\etalchar{+}}23]{BFOPZ}
Tainara Borges, Benjamin Foster, Yumeng Ou, Jill Pipher, and Zirui Zhou.
\newblock Sparse bounds for the bilinear spherical maximal function.
\newblock {\em Journal of the London Mathematical Society}, 107(4):1409--1449,
  2023.

\bibitem[BHI{\etalchar{+}}17]{BHIPR}
Michael Bennett, Derrick Hart, Alex Iosevich, Jonathan Pakianathan, and Misha
  Rudnev.
\newblock Group actions and geometric combinatorics in {$\mathbb{F}_q^d$}.
\newblock {\em Forum Math.}, 29(1):91--110, 2017.

\bibitem[BIT16]{BIT}
Michael Bennett, Alexander Iosevich, and Krystal Taylor.
\newblock Finite chains inside thin subsets of $\mathbb{R}^d$.
\newblock {\em Analysis \& PDE}, 9(3):597--614, 2016.

\bibitem[CZ22]{MCZZ}
Michael Christ and Zirui Zhou.
\newblock A class of singular bilinear maximal functions, 2022.

\bibitem[DGO{\etalchar{+}}21]{DGOWWZ}
Xiumin Du, Larry Guth, Yumeng Ou, Hong Wang, Bobby Wilson, and Ruixiang Zhang.
\newblock Weighted restriction estimates and application to {F}alconer distance
  set problem.
\newblock {\em Amer. J. Math.}, 143(1):175--211, 2021.

\bibitem[DIO{\etalchar{+}}21]{DIOWZ}
Xiumin Du, Alex Iosevich, Yumeng Ou, Hong Wang, and Ruixiang Zhang.
\newblock An improved result for {F}alconer's distance set problem in even
  dimensions.
\newblock {\em Math. Ann.}, 380(3-4):1215--1231, 2021.

\bibitem[DR22]{dosidisramos}
Georgios Dosidis and João P.~G. Ramos.
\newblock The multilinear spherical maximal function in one dimension.
\newblock {\em preprint arXiv:2204.00058}, 2022.

\bibitem[DZ19]{DZ19}
Xiumin Du and Ruixiang Zhang.
\newblock Sharp {$L^2$} estimates of the {S}chr\"{o}dinger maximal function in
  higher dimensions.
\newblock {\em Ann. of Math. (2)}, 189(3):837--861, 2019.

\bibitem[Fal85]{falconer85}
K.~J. Falconer.
\newblock On the {H}ausdorff dimensions of distance sets.
\newblock {\em Mathematika}, 32(2):206--212 (1986), 1985.

\bibitem[GI12]{greenleafiosevich}
Allan Greenleaf and Alex Iosevich.
\newblock On triangles determined by subsets of the {E}uclidean plane, the
  associated bilinear operators and applications to discrete geometry.
\newblock {\em Anal. PDE}, 5(2):397--409, 2012.

\bibitem[GILP15]{GILP2015}
Allan Greenleaf, Alex Iosevich, Bochen Liu, and Eyvindur Palsson.
\newblock A group-theoretic viewpoint on {E}rd\"os-{F}alconer problems and the
  {M}attila integral.
\newblock {\em Rev. Mat. Iberoam.}, 31(3):799--810, 2015.

\bibitem[GIOW20]{GIOW}
Larry Guth, Alex Iosevich, Yumeng Ou, and Hong Wang.
\newblock On {F}alconer's distance set problem in the plane.
\newblock {\em Invent. Math.}, 219(3):779--830, 2020.

\bibitem[GPP23]{GPP2023}
José Gaitan, Eyvindur~Ari Palsson, and Georgios Psaromiligkos.
\newblock On restricted falconer distance sets.
\newblock {\em preprint arXiv:2305.18053}, 2023.

\bibitem[Gra14]{Grafakosbook}
Loukas Grafakos.
\newblock {\em Modern {F}ourier analysis}, volume 250 of {\em Graduate Texts in
  Mathematics}.
\newblock Springer, New York, third edition, 2014.

\bibitem[HI05]{HI2005}
S.~Hofmann and A.~Iosevich.
\newblock Circular averages and {F}alconer/{E}rd\"{o}s distance conjecture in
  the plane for random metrics.
\newblock {\em Proc. Amer. Math. Soc.}, 133(1):133--143, 2005.

\bibitem[HIKR11]{HIKR}
Derrick Hart, Alex Iosevich, Doowon Koh, and Misha Rudnev.
\newblock Averages over hyperplanes, sum-product theory in vector spaces over
  finite fields and the {E}rd\"os-{F}alconer distance conjecture.
\newblock {\em Trans. Amer. Math. Soc.}, 363(6):3255--3275, 2011.

\bibitem[Hir53]{Hirschman}
I.~I. Hirschman, Jr.
\newblock A convexity theorem for certain groups of transformations.
\newblock {\em J. Analyse Math.}, 2:209--218, 1953.

\bibitem[HLRN16]{HLR16}
Brandon Hanson, Ben Lund, and Oliver Roche-Newton.
\newblock On distinct perpendicular bisectors and pinned distances in finite
  fields.
\newblock {\em Finite Fields Appl.}, 37:240--264, 2016.

\bibitem[IMT12]{IMT2012}
Alex Iosevich, Mihalis Mourgoglou, and Krystal Taylor.
\newblock On the {M}attila-{S}j\"{o}lin theorem for distance sets.
\newblock {\em Ann. Acad. Sci. Fenn. Math.}, 37(2):557--562, 2012.

\bibitem[IR07]{IR2007}
A.~Iosevich and M.~Rudnev.
\newblock Erd\"{o}s distance problem in vector spaces over finite fields.
\newblock {\em Trans. Amer. Math. Soc.}, 359(12):6127--6142, 2007.

\bibitem[JL20]{JL}
Eunhee Jeong and Sanghyuk Lee.
\newblock Maximal estimates for the bilinear spherical averages and the
  bilinear {B}ochner-{R}iesz operators.
\newblock {\em J. Funct. Anal.}, 279(7):108629, 29, 2020.

\bibitem[Liu19]{Liu19}
Bochen Liu.
\newblock An {$L^2$}-identity and pinned distance problem.
\newblock {\em Geom. Funct. Anal.}, 29(1):283--294, 2019.

\bibitem[Mat87]{Mattila87}
Pertti Mattila.
\newblock Spherical averages of {F}ourier transforms of measures with finite
  energy; dimension of intersections and distance sets.
\newblock {\em Mathematika}, 34(2):207--228, 1987.

\bibitem[MPP{\etalchar{+}}22]{MPPRS}
Brendan Murphy, Giorgis Petridis, Thang Pham, Misha Rudnev, and Sophie Stevens.
\newblock On the pinned distances problem in positive characteristic.
\newblock {\em J. Lond. Math. Soc. (2)}, 105(1):469--499, 2022.

\bibitem[Orp17]{Orponen}
Tuomas Orponen.
\newblock On the distance sets of ahlfors--david regular sets.
\newblock {\em Advances in Mathematics}, 307:1029--1045, 2017.

\bibitem[OT22]{OT2020}
Yumeng Ou and Krystal Taylor.
\newblock Finite point configurations and the regular value theorem in a
  fractal setting.
\newblock {\em Indiana Univ. Math. J.}, 71(4):1707--1761, 2022.

\bibitem[PS22]{PSsparse}
Eyvindur~Ari Palsson and Sean~R. Sovine.
\newblock Sparse bounds for maximal triangle and bilinear spherical averaging
  operators.
\newblock {\em preprint arXiv:2110.08928}, 2022.

\bibitem[Sog93]{booksogge}
Christopher~D. Sogge.
\newblock {\em Fourier integrals in classical analysis}, volume 105 of {\em
  Cambridge Tracts in Mathematics}.
\newblock Cambridge University Press, Cambridge, 1993.

\bibitem[Ste93]{steinhabook}
Elias~M. Stein.
\newblock {\em Harmonic analysis: real-variable methods, orthogonality, and
  oscillatory integrals}, volume~43 of {\em Princeton Mathematical Series}.
\newblock Princeton University Press, Princeton, NJ, 1993.
\newblock With the assistance of Timothy S. Murphy, Monographs in Harmonic
  Analysis, III.

\bibitem[SW21]{SW}
Pablo Shmerkin and Hong Wang.
\newblock On the distance sets spanned by sets of dimension $d/2$ in
  $\mathbb{R}^d$.
\newblock {\em preprint arXiv:2112.09044}, 2021.

\end{thebibliography}

\end{document}